\documentclass[a4paper,12pt]{amsart}

\usepackage{float}
\usepackage{euscript,eufrak,verbatim}
\usepackage{graphicx}
\usepackage[usenames]{color}
\usepackage[colorlinks,linkcolor=red,anchorcolor=blue,citecolor=blue]{hyperref}
\usepackage{amsmath}
\usepackage{amsthm}
\usepackage[all]{xy}
\usepackage{graphicx} 
\usepackage{amssymb} 
\usepackage{amsfonts}
\usepackage{enumitem}

\usepackage{mathrsfs}
\usepackage{amscd}
\usepackage{pifont}

\usepackage[pagewise]{lineno}

\makeatletter
\newcommand{\svdots}{%
  \vbox{\fontsize{\sf@size}{\sf@size pt}\linespread{0.3}\selectfont
    \kern0.2\baselineskip
    \hbox{.}\hbox{.}\hbox{.}%
    \kern0.1\baselineskip
  }%
}
\makeatother

\theoremstyle{plain}
\newtheorem{main theorem}{Main Theorem}
\newtheorem{theorem}{Theorem}[section]
\newtheorem{lemma}[theorem]{Lemma}

\newtheorem{claim}[theorem]{Claim}

\newtheorem{lemma-definition}[theorem]{Lemma-Definition}
\theoremstyle{definition}
\newtheorem{definition}[theorem]{Definition}
\newtheorem{remark}[theorem]{Remark}

\newtheorem{problem}[theorem]{Problem}

\makeatletter 
\@addtoreset{equation}{section}
\numberwithin{equation}{section}

\newcommand{\supp}{\mathrm{supp}}
\newcommand{\sing}{\mathrm{sing}}
\newcommand{\lip}{\mathrm{Lip}}
\newcommand{\fix}{\mathrm{Fix}}

\newcommand{\osc}{\mathrm{osc}}
\newcommand{\Id}{\mathrm{Id}}
\newcommand{\Lie}{\mathrm{Lie}}
\newcommand{\loc}{\mathrm{loc}}
\newcommand{\dist}{\mathrm{dist}}
\newcommand{\diver}{\mathrm{div}}

\def\R{\mathbb{R}}
\def\N{\mathbb{N}}
\def\Z{\mathbb{Z}}
\def\11{\mathbf{1}}
\def\gg{\mathbf{g}}
\def\LL{\mathcal{L}}
\def\rr{\mathsf{r}}
\def\DD{\mathcal{D}}
\def\CC{\mathcal{C}}
\def\RR{\mathcal{R}}
\def\OO{\mathcal{O}}
\def\UU{\overline{U}}
\def\VV{\overline{V}}
\def\HH{\mathcal{H}}

\title[Lipschitz Bebutov--Kakutani theorem for connected Lie group actions]
{A Lipschitz refinement of the Bebutov--Kakutani dynamical embedding theorem for connected Lie group actions}

 \author{Yonatan Gutman, Qiang Huo, Kangbo Ouyang}

 \address[Yonatan Gutman]
 {Institute of Mathematics, Polish Academy of Sciences, ul. \'{S}niadeckich 8, Warsaw, 00-656, Poland}

\email{gutman@impan.pl}

 \address[Qiang Huo]
 {School of Mathematical Sciences, Beijing University of Posts and Telecommunications, Beijing 100876, China
 \newline \indent Key Laboratory of Mathematics and Information Networks (Beijing University of Posts and Telecommunications), Ministry of Education, Beijing 100876, China}

 \email{qianghuo@ustc.edu.cn}

 \address[Kangbo Ouyang]
 {School of Mathematical Sciences, University of Science and Technology of China, Hefei 230026, China}

 \email{oy19981231@mail.ustc.edu.cn}

\begin{document}

 \subjclass[2020]{37B02, 37B05, 35K08}
\date{\today}
 \keywords{Connected Lie group, dynamical embedding, Bebutov--Kakutani theorem, Lipschitz functions, Laplace--Beltrami operator, heat semigroup, Riemannian metric}

\maketitle

\begin{abstract}
Let $G$ be a connected Lie group equipped with a compatible right-invariant metric and let $G$ act continuously on a compact metrizable space $X$.
We prove that $X$ admits a $G$-equivariant embedding into 
the right argument shift action on the space of one-Lipschitz maps from $G$ to $[0,1]$ if and only if
the set of fixed points topologically embeds into $[0,1]$.
This generalizes a theorem proved by Gutman, Huo and Tsukamoto for $\R^k$-flows.
\end{abstract}

\section{Introduction}  \label{section: Introduction}

The remarkable Bebutov--Kakutani theorem \cite{Beb40,Kak68} states that a continuous action of $\R$ on a compact metrizable space equivariantly embeds into the shift action on the space of continuous functions on $\R$ if and only if the fixed point set of the $\R$-action embeds into the unit interval. 
The theorem has also been
established for progressively broader classes of acting groups:
connected Lie groups \cite{Chen75a}, connected locally compact groups
\cite{Jaw74,Aus88}, and connected generalized Lie groups in the sense
of Chen \cite{Chen75b}.\footnote{The definition of Chen and Yoh can be
found in \cite[Definition~4]{CY74}. Note that this notion differs from
Gleason's notion of a generalized Lie group; see \cite[Definition 4.1]{Gle51}. The
class defined by Chen and Yoh includes Lie groups, Banach Lie groups,
and all locally compact groups; see \cite[Proposition~13]{CY74}.}
However, the universal space appearing in the Bebutov--Kakutani theorem, namely the space of continuous functions on $\R$, fails to be compact (nor locally compact).
Gutman, Jin and Tsukamoto \cite{GJT19} showed that the space of bounded one-Lipschitz functions on $\R$ is indeed a compact universal space, thereby establishing a Lipschitz refinement of the Bebutov--Kakutani theorem.
This naturally raises the question of whether their result can be extended to acting groups beyond $\R$.
We review recent progress towards this direction in the following paragraph.

In \cite{GH25}, Gutman and Huo established a sufficient criterion for
embedding a continuous $\R^k$-action into the shift $\R^k$-action on
the space of bounded one-Lipschitz functions on $\R^k$. Their
criterion required that the fixed-point set embed into the unit
interval, a necessary condition, and that the stabilizer\footnote{For
the definition of \textit{stabilizer}, see Subsection
\ref{subsection:topological dynamical systems}.} of every non-fixed
point be a discrete subgroup of $\R^k$, an additional condition that
is not necessary.
Subsequently, Gutman, Huo, and Tsukamoto \cite{GHT25} removed the
discreteness assumption on the stabilizers of non-fixed points. They
thereby proved that embeddability of the fixed-point set into the unit
interval is the only obstruction to the equivariant embedding of
continuous $\R^k$--actions. This satisfactory characterization
naturally raises the question of whether an analogous result holds for
actions of non-commutative groups.

In this paper, we are concerned with the situation where the acting group is an arbitrary connected Lie group.
More precisely, let $G$ be a connected Lie group with identity $e$ and let $d$ be a right-invariant compatible metric on $G$. Without loss of generality, we always assume that $\dim G\geq 1$. 
We define $\lip_1(G, I)$ as the space of 
all one-Lipschitz maps (w.r.t. $d$) from $G$ to $I = [0,1]$.
Namely, this is the space of all maps $f\colon G\to I$ satisfying 
$|f(g)-f(h)|\leq d(g,h)$ for all $g, h\in G$.
This space is compact (with respect to the compact-open topology) and metrizable.
The group $G$ acts continuously on $\lip_1(G, I)$ by the right argument shift, i.e. $(gu)(h)=u(hg)$ for $g, h\in G$.

Let $G$ act continuously on a compact metrizable space $X$. 
We call the pair $(X, G)$ a $G$-action. Denote $\fix(X, G)= \{x\in X\mid gx = x \, (\forall g\in G)\}$.
We say that $(X, G)$ equivariantly embeds into 
$\lip_1(G, I)$ if there exists a topological embedding $f\colon X\to \lip_1(G, I)$ such that 
$f(gx)(h)=f(x)(hg)$ for all $x\in X$ and $g,h\in G$.

We are now in position to state the main result of this paper, which generalizes the above-mentioned results.

\begin{theorem}[Main theorem] \label{theorem: main theorem}
Let $(X, G)$ be a $G$-action where $G$ is a connected Lie group.
It equivariantly embeds into $\lip_1(G, I)$ if and only if $\fix(X, G)$ topologically embeds into $I$.
\end{theorem}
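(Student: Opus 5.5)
The necessity direction is immediate. If $f\colon X\to \lip_1(G,I)$ is an equivariant embedding and $x\in \fix(X,G)$, then $f(x)(hg)=f(gx)(h)=f(x)(h)$ for all $g,h\in G$, so $f(x)$ is a constant function. Hence $x\mapsto f(x)(e)$ maps $\fix(X,G)$ injectively and continuously into $I$, and by compactness it is a topological embedding.

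For sufficiency, fix an embedding $\phi_0\colon \fix(X,G)\to I$. The plan follows the strategy of \cite{GJT19,GHT25}. For a continuous $\varphi\colon X\to I$ we define the orbit-profile $f_\varphi(x)(h)=\varphi(hx)$. This map is always equivariant, since $f_\varphi(gx)(h)=\varphi(hgx)=f_\varphi(x)(hg)$. However, it is $1$-Lipschitz in $h$ only if $\varphi$ is Lipschitz along orbits, that is, $|\varphi(hx)-\varphi(h'x)|\le d(h,h')$. By right-invariance it suffices to have $|\varphi(gx)-\varphi(x)|\le d(g,e)$ for all $g,x$. We then aim to prove that, in the Baire space $\mathcal{A}$ of all continuous $\varphi\colon X\to I$ that are $1$-Lipschitz along orbits and extend $\phi_0$ on $\fix(X,G)$ (with the sup metric), the set of $\varphi$ for which $f_\varphi$ is injective is comeager. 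Injectivity already gives an embedding, because $X$ is compact. To handle nonconstant maps, we replace "$1$-Lipschitz" by "$(1-\epsilon)$-Lipschitz" or rescale. Before the Baire argument, we check that $\mathcal{A}$ is nonempty and closed. Extend $\phi_0$ by Tietze to $X$, then smooth it along orbits, for example by taking $\tilde\varphi(x)=\inf_{g}\bigl(\psi(gx)+d(g,e)\bigr)$, which is the orbitwise McShane envelope. This envelope is $1$-Lipschitz along orbits, is upper semicontinuous, and fixes values on fixed points. Continuity is then obtained by averaging over a compact neighbourhood of $e$.

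The Baire step goes as follows. For a pair of disjoint closed sets $A,B\subset X$, which we take from a countable basis-generated family, let $\mathcal{A}_{A,B}$ be the set of $\varphi$ such that $f_\varphi(x)\ne f_\varphi(y)$ for all $x\in A$, $y\in B$. These sets are open, and we must show they are dense. There are three cases for a pair $(x,y)$. If both points are fixed, $\phi_0$ separates them. If exactly one is fixed, the profile of the other is nonconstant after a small perturbation, or its constant value differs. If neither is fixed, a local perturbation near the orbits must make $h\mapsto\varphi(hx)$ and $h\mapsto\varphi(hy)$ differ. The perturbation is built by a finite-dimensionality/"dimension of orbit space" trick, as in \cite{GHT25}. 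Over a small closed region of $X$ where the stabilizers are not too large, we choose a tube $U\cdot S$ through a local cross-section. On it we add $\sum_i c_i\,\beta_i(x)$, where the $\beta_i$ are bump functions. To keep these orbitwise Lipschitz, we regularize them with the heat semigroup $e^{t\Delta}$ of a left-invariant Laplace--Beltrami operator on $G$, that is, by convolving along orbits with the heat kernel. Gradient bounds on the heat kernel give an orbitwise Lipschitz constant that is small uniformly, which suggests the keywords in the paper. We then use a dimension count with generic coefficients $c_i$ to avoid the coincidences.

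I expect the main obstacle to be non-fixed points with large, non-discrete stabilizers, possibly noncompact and varying with $x$, together with the noncommutativity of $G$. For such points the orbit $G/\mathrm{Stab}(x)$ can be small, so there is little room to encode information. The points also accumulate onto $\fix(X,G)$, where the Lipschitz constraint forces $\varphi$ to be almost constant near the orbits. I would first stratify $X\setminus\fix(X,G)$ by a quantity such as $\sup_{g\in B(e,1)}\dist(gx,x)$, which is positive off the fixed set. Then I would perform the perturbations with amplitude proportional to this quantity, so that they vanish continuously at $\fix(X,G)$ while keeping the Lipschitz bound. Separating two such points $x,y$ then reduces to making $\varphi(x)\ne\varphi(y)$ or $\varphi(gx)\ne\varphi(gy)$ for some small $g$. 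This is achievable by a perturbation supported near $x$ but away from the orbit of $y$ (when the orbits are distinct), and, when $y=gx$ lies on the same orbit, by breaking the approximate $g$-invariance of $\varphi$ along that orbit.
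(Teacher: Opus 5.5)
Your necessity argument is fine, but the sufficiency part has a genuine gap. The whole plan rests on the density of the sets $\mathcal{A}_{A,B}$ for pairs of non-fixed points, and you describe this step only heuristically; the mechanisms you sketch do not deliver it.

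A perturbation ``supported near $x$ but away from the orbit of $y$'' need not exist, since $\overline{Gy}$ may contain $x$ (e.g.\ for a minimal action). The classical tube argument avoids this by adding, on a tube $T_J(S_0)$, a window profile $v\in C(J)$ chosen outside the compact set $D_f$ of bad differences $\beta_b-\alpha_s$. That choice is possible only because a small ball in the infinite-dimensional space $C(J)$ is not compact. Once perturbations must be orbitwise Lipschitz with small constant, the admissible window profiles form a compact set by Arzel\`a--Ascoli. Then $D_f$, a continuous image of $S_0\times B$, may contain all of them when $X$ is infinite-dimensional.

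For the same reason, a ``dimension count with generic coefficients $c_i$'' over finitely many bumps cannot work for an arbitrary compact metrizable $X$. Your stratification by displacement near $\fix(X,G)$ does not address this either, so the comeagerness claim remains unproved. There is also a smaller flaw in the nonemptiness step. Averaging an upper semicontinuous function along orbits does not make it continuous, because orbits become short near $\fix(X,G)$. Your envelope is continuous when $\{g:d(g,e)\le 1\}$ is compact, but $d$ need not be proper.

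The idea you are missing is that the heat-kernel smoothing, which you use only to tame bump functions, can be applied to the entire orbit profile, and it is \emph{injective}. This makes any Baire argument in the Lipschitz class unnecessary. The paper proceeds as follows.
\begin{enumerate}
\item It starts from the classical (Chen) embedding $J_{f_0}\colon X\to C(G)$, which already handles fixed points and arbitrary stabilizers.
\item It rescales $f_0$ so that $f=\alpha f_0+\beta$ takes values in an interval $[a_1,b_1]\subset(0,1)$ of small length $\delta$.
\item It sets $q(x)=\int_G f(hx)\,d\nu(h)$. Here $\nu$ is the kernel at $e$ of $Q=\lambda\int_0^\infty e^{-\lambda t}e^{-t\Delta}\,dt=\lambda(\lambda+\Delta)^{-1}$, for the Laplacian of a right-invariant Riemannian metric.
\end{enumerate}
Then $J_q=Q\circ J_f$ is equivariant. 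It is injective because $(\lambda+\Delta)Qu=\lambda u$ in the sense of distributions. It satisfies $\lip_\rho(J_q(x))\le L\,\osc(J_f(x))\le L\delta$, by interior elliptic $L^p$ estimates made uniform by right-invariance. Finally, comparing $d$ with $\rho$ at small and large scales (constants $c,\eta$) and choosing $\delta<\min\{\tfrac12,\eta,c/L\}$ gives $J_q(x)\in\lip_1(G,I)$.
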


 The proof starts with an equivariant embedding into $C(G)$ \footnote{For the definition of $C(G),C_b(G)$ and $C_b^1(G)$, see Subsection \ref{Subsec:smooth functions}.}. The main
new ingredient is an injective $G$-equivariant operator $Q:C_b(G)\to
C_b^1(G)$, constructed from the heat flow on $G$. This operator
smooths functions while preserving the information needed for
injectivity and providing quantitative control of their derivatives.
Applying $Q$ to a suitably rescaled embedding therefore preserves
injectivity and equivariance while producing functions with the
required Lipschitz bound, giving the desired equivariant embedding
into $\lip_1(G,I)$.

\begin{remark}
     The connectedness assumption cannot in general be omitted. Indeed, let
$G=\R\times\Z/2\Z$ and $X=[0,1]^3\times\{0,1\}$. Let $\R$ act trivially
on $X$, and let the nontrivial element of $\Z/2\Z$ interchange the two
copies of $[0,1]^3$. Then $\fix(X,G)=\emptyset$. Suppose that
$\Phi\colon X\to\lip_1(G,[0,1])$ is equivariant. Since
$\R$ acts trivially on $X$, equivariance implies that every function
in $\Phi(X)$ is invariant under right translations by the subgroup
$\R\times\{0\}$. Such a function is constant on each of the two
components of $G$ and is therefore completely determined by its values
on these components. Consequently, $\Phi(X)$ is contained in a
subspace naturally identified with a subset of $[0,1]^2$. Since $\dim
X=3$, the map $\Phi$ cannot be an embedding.
\end{remark}

Theorem \ref{theorem: main theorem} also provides a complete answer to \cite[Problem 1.8]{GHT25}.
It is worth mentioning that, when the acting group is a discrete group - for instance, $\Z^k$ or discrete amenable group - and the phase space is infinite-dimensional, the equivariant embedding problem exhibits strikingly different nature and is usually far more challenging. In this case, mean topological dimension, a new topological invariant introduced by Gromov \cite{Gro99}, turns out to be a new obstruction. 
Subsequent works have been developed along this line, see e.g. \cite{LW00,Lin99,LT14,GT20,GQT19,Nar24,DL25,Mey26}.

  We end this section with the following open question.
 \begin{problem}
 Let $G$ be a connected, locally compact, second-countable group, and let $d$ be a proper\footnote{The metric $d$ is called proper if every closed ball in terms of $\{h\in G:d(g,h)\leq r\}$, where $g\in G,r>0$, is compact.} compatible right-invariant metric on $G$. Is it true that a $G$-action $(X,G)$ equivariantly embeds into $\lip_1(G, I)$ if and only if $\fix(X, G)$ topologically embeds into $I$?
 \end{problem}

\section{Preliminaries}\label{Section:preliminaries}

Throughout this section, let $G$ be a smooth Lie group and $U\subset G$ be open.
 Let $\gg$ be a right-invariant Riemannian metric tensor on $G$, which means for every $a\in G$, the right translation defined by $\rr_a: (G,\gg)\to (G,\gg), g\mapsto ga$ is an isometry. Let $\mu$ denote the Riemannian volume measure induced by $\gg$. Then $\mu$ is right-invariant, i.e.  $\mu(Eg)=\mu(E)$ for every Borel set $E\subset G$ and $g\in G$. Let $\rho$ be the geodesic distance induced by $\gg$. We refer to $\rho$ as the  Riemannian distance induced by $\gg$.  Set $n=\dim G$.
For $1\leq p<\infty$, denote by $L^p(G,\mu)$ the space of $p$-integrable functions on $G$ w.r.t. $\mu$, while $L^{\infty}(G,\mu)$ consists of all  measurable functions that are essentially bounded. $C_c(G)$ denotes the space of continuous functions with compact support in $G$. We write $V\Subset U$ if the closure of $V$ is contained in $U$.

\subsection{Spaces of smooth functions}\label{Subsec:smooth functions}

\begin{definition}[Smooth function]
A function $f:U\to\R$ is called \textbf{smooth}, or of class
$C^\infty$ (respectively continuously differentiable, or of class $C^1$), if for every smooth coordinate chart $\kappa:V\to \kappa(V)\subset\mathbb R^n,  V\Subset U$,
the function
\[
f\circ\kappa^{-1}:\kappa(V)\to\R
\]
is a $C^\infty$ (respectively $C^1$) function in the usual Euclidean sense.
\end{definition}

Denote by $C(G)$ (respectively $C^1(G)$) the space of all continuous (respectively continuously differentiable) functions from $G$ to $\R$. Denote by $C_b(G)$ the space of real-valued bounded continuous functions on $G$.
For $u\in C_b(G)$, set
\begin{equation*}
    \osc(u)=\sup_{G}u-\inf_{G}u \text{  and  } \lip_\rho(u)=\sup_{g\neq h}\frac{|u(g)-u(h)|}{\rho(g,h)}.
\end{equation*}

For $u\in C^1(G)$, the \textbf{Riemannian gradient} $\nabla u$ is the
continuous vector field on $G$, which in local coordinates $x^1,\ldots,x^n$ has the form
\[\nabla u=\sum_{i,j}g^{ij}\frac{\partial u}{\partial x^i}\frac{\partial}{\partial x^j},\]
where $(g^{ij})$ is the inverse of the coordinate matrix $(g_{ij})$ of $\gg$.

Define
\begin{equation*}
    C_b^1(G)=\{u\in C^1(G): \|u\|_{\infty}+\|\nabla u\|_{\infty}<\infty\},
\end{equation*}
 where $\|\nabla u\|_\infty=\sup_{g\in G}|\nabla u(g)|_{\gg}$,
and $|\cdot|_{\gg}$ is the norm induced by the Riemannian metric tensor.

\subsection{Distributions}
\begin{definition}
A \textbf{distribution} on $U$ is a continuous linear functional $u:C_c^\infty(U)\to \R$
where $C_c^\infty(U)$ denotes the space of smooth compactly supported
functions on $U$, endowed with its usual test-function topology.
 More precisely, a sequence $(\varphi_j)_{j\in\N}$ converges to $0$ in $C_c^\infty(U)$ if and only if there exist a compact
set $K\subset U$, finitely many smooth coordinate charts $
\kappa_\ell:V_\ell\longrightarrow\kappa_\ell(V_\ell)\subset\mathbb R^n$,
$\ell=1,\ldots,N,
$
and compact sets $K_\ell\subset V_\ell\subset U$ such that
$
K\subset\bigcup_{\ell=1}^N K_\ell,
$ and $\operatorname{supp}\varphi_j\subset K$
for every $j$,
and, for every $\ell=1,\ldots,N$ and every multi-index
$\alpha\in\mathbb N_0^n$,
\[
\sup_{y\in\kappa_\ell(K_\ell)}
\left|
\partial^\alpha
\bigl(\varphi_j\circ\kappa_\ell^{-1}\bigr)(y)
\right|
\longrightarrow 0
\qquad\text{as }j\to\infty.
\]
We write $\langle u,\varphi\rangle, \varphi\in C_c^\infty(U)$
for the value of $u$ on $\varphi$.
\end{definition}
Every function $f\in L^1_{\mathrm{loc}}(U,\mu)$ defines a distribution by
\[
\langle f,\varphi\rangle
=
\int_U f(x)\varphi(x)\,d\mu(x),
\qquad
\varphi\in C_c^\infty(U).
\]
In this way, locally integrable functions are naturally identified with
distributions.

Similarly, a \textbf{distributional vector field} on $U$ is a
continuous linear functional on the space $C_c^\infty(U,TU)$
of smooth compactly supported vector fields, endowed with the
analogous test-function topology, defined by applying the
preceding convergence conditions to each coordinate component.

\begin{definition}[Smooth distribution]
Let $u$ be a distribution on $U$, and let $V\subset U$ be open.
We say that $u$ is \textbf{smooth} on $V$ if there exists
$f\in C^\infty(V)$ such that
\[
\langle u,\varphi\rangle
=
\int_V f(x)\varphi(x)\,d\mu(x)
\]
for every $\varphi\in C_c^\infty(V)$.
Equivalently, the restriction $u|_V$ is the distribution induced by a
smooth function on $V$.
\end{definition}

\subsection{Laplace--Beltrami operator}
Let $X$ be a distributional vector field  on $U$ and $u$ be a distribution on $U$. 
The \textbf{divergence} of $X$, denoted by $\diver X$, is defined by
\[\langle\diver X,\varphi\rangle=-\langle X,\nabla\varphi\rangle,\qquad \forall\varphi\in C_c^{\infty}(U).\]
 The \textbf{distributional
gradient} of $u$ is the distributional vector field $\nabla u$
defined by
\[
\langle \nabla u,Y\rangle
:=-\langle u,\operatorname{div}Y\rangle,
\qquad Y\in C_c^\infty(U,TU),
\]

The \textbf{Laplace--Beltrami operator} \footnote{In this paper, we use the convention that the Laplace–Beltrami operator is positive semidefinite. The opposite sign convention is also used in the literature; see e.g. \cite[Subsection 2.1]{Gri06}.} is defined by
\[\Delta u:=-\diver(\nabla u),\]
such that for all $\varphi\in C_c^{\infty}(U)$ it holds $\langle \Delta u, \varphi\rangle= \langle \nabla u,\nabla\varphi\rangle$.

\subsection{Sobolev spaces}
Let $k\in\{1,2\}$ and $1<p<\infty$.
For an open set $\Omega\subset\mathbb R^n$, we use the
standard Sobolev space $W^{k,p}(\Omega)$, consisting of
functions whose weak partial derivatives of order at most
$k$, including the function itself, belong to $L^p(\Omega)$.
We write $f\in W_{\mathrm{loc}}^{k,p}(\Omega)$ if
$f|_{\Omega'}\in W^{k,p}(\Omega')$ for every open set
$\Omega'\Subset\Omega$. The space $W_0^{1,p}(\Omega)$ is the closure of
$C_c^\infty(\Omega)$ in $W^{1,p}(\Omega)$.

\subsection{Singular support}
\begin{definition}[Singular support]\label{def:singular_support}
Let $u$ be a distribution on $U$. The \textbf{singular support} of $u$,
denoted by $\sing\supp u$,
is the complement in $U$ of the largest open subset on which $u$ is smooth.
Equivalently,
\[
\operatorname{sing\,supp}u
=
U\setminus
\bigcup
\left\{
V\subset U:
V\text{ is open and }u|_V\text{ is smooth}
\right\}.
\]
\end{definition}
Thus $\operatorname{sing\,supp}u$ is the smallest closed subset of $U$
such that $u$ is smooth on its complement.

\subsection{Essentially self-adjoint operators}

Let $\HH$ be a Hilbert space with inner product
$\langle\cdot,\cdot\rangle$. Let $\DD(T)$ be a dense linear
subspace of $\HH$, and let $T:\DD(T)\to\HH$ be a linear operator. 
A linear subspace $\DD$ of $\DD(T)$ is called a \textbf{core} for $T$ if for every $x\in\DD(T)$, there exists a sequence $(x_n)_{n\in\N}\subset\DD$ such that $x_n\to x$ and $Tx_n\to Tx$ in $\HH$ as $n\to\infty$. 
Set
\[\DD(T^*)=\{y\in \HH:\exists u\in \HH\text{ such that } \langle Tx,y\rangle=\langle x,u\rangle, \forall x\in\DD(T)\}.\]
Since $\DD(T)$ is dense in $\HH$, the element $u\in \HH$ (when exists) satisfying $\langle Tx,y\rangle=\langle x,u\rangle$ for all $x\in\DD(T)$ is uniquely determined by $y$. Set $T^*y=u$.
The linear operator $T^*$ is called the \textbf{adjoint operator} of $T$.

 Let $T$ be a densely defined linear operator on $\HH$. Then $T$ is called \textbf{symmetric} if $\langle Tu,v\rangle=\langle u,Tv\rangle$ for all $u,v\in\DD(T)$. We further say $T$ is \textbf{self-adjoint} if $T=T^*$ and $T$ is \textbf{essentially self-adjoint} if its closure $\overline{T}$ is self-adjoint\footnote{Note that essential self-adjointness is a strictly weaker property than self-adjointness.}. Equivalently, $T$ has a unique self-adjoint extension, namely its closure.

\subsection{Heat semigroup and heat kernel}
Assume the Riemannian manifold $(G,\gg)$ is geodesically complete. 
Let $\Delta\vert_{C_c^{\infty}(G)}$ be the Laplace--Beltrami  operator defined on $C_c^{\infty}(G)$. By \cite[Theorem 2.2]{Gri06}, $\Delta\vert_{C_c^{\infty}(G)}$ is essentially self-adjoint. Denote by $\Delta$ the self-adjoint extension of $\Delta\vert_{C_c^{\infty}(G)}$, whose domain $\DD(\Delta)$ is dense in $L^2(G,\mu)$.
By the spectral theorem \cite[Theorem 5.2]{Sch12}, it generates a strongly continuous contraction semigroup on $L^2(G,\mu)$, denoted by $\{e^{-t\Delta}\}_{t\geq 0}$ and named the \textbf{heat semigroup}.

Equivalently, for every $f\in L^2(G,\mu)$, $u(t,h):=(e^{-t\Delta}f)(h)$ is the unique solution satisfying the \textbf{heat equation} 
\begin{equation*}
    \partial_tu(t,h)+\Delta u(t,h)=0
\end{equation*}
and the initial condition, i.e. $u(t,\cdot)$ converges to $f$ in $L^2(G,\mu)$ as $t\to 0$.
Moreover, by \cite[Theorem 3.3]{Gri06}, there exists a smooth function $p_t(g,h)$ on $\R_+\times G\times G$, called the \textbf{heat kernel}, such that
\[(e^{-t\Delta}u)(g)=\int_G p_t(g,h)u(h)d\mu(h)\]
for all $u\in L^2(G,\mu), t>0$ and $g\in G$.

\subsection{Lie theory}
Given a connected Lie group $G$ with Lie algebra $\mathfrak g$, denote by $\exp: \mathfrak g\to G$ the exponential map of $G$. A connected Lie group has no proper open subgroups. The following result, known as closed subgroup theorem, can be found in Theorem 20.10 and Corollary 20.11 of \cite{Lee03}.
\begin{theorem}\label{theorem: closed subgroup theorem}(Closed subgroup theorem)
    Suppose $G$ is a Lie group and $H$ is a subgroup of $G$. Then one has the following equivalence:
    $H$ is closed in $G$ $\iff H$ is an embedded Lie subgroup of $G\iff$ $H$ is an embedded submanifold of $G$.
\end{theorem}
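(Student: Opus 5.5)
The statement is quoted from \cite[Theorem 20.10, Corollary 20.11]{Lee03}; I would prove it via the chain embedded submanifold $\Rightarrow$ closed $\Rightarrow$ embedded Lie subgroup, with the middle step carrying essentially all the work.

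\textbf{Easy directions.} If $H$ is an embedded submanifold, it is locally closed: there is an open $W\ni e$ with $H\cap W$ closed in $W$. The closure $\overline H$ is a subgroup, and $H$ is open in $\overline H$; translating by elements of $H$ shows this, since each $h\in H$ has the neighborhood $hW$ with $H\cap hW$ relatively closed. An open subgroup of a topological group is also closed, because its complement is a union of cosets. Hence $H=\overline H$ is closed.

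If $H$ is an embedded Lie subgroup, it is by definition an embedded submanifold. Conversely, if $H$ is an embedded submanifold and a subgroup, then multiplication and inversion on $H$ are restrictions of smooth maps on $G$ taking values in $H$. Smooth maps into an embedded submanifold remain smooth as maps into it, so $H$ is an embedded Lie subgroup.

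\textbf{Closed $\Rightarrow$ embedded submanifold.} Fix a norm on $\mathfrak g$ and set
\[
\mathfrak h=\{X\in\mathfrak g:\exp(tX)\in H \text{ for all } t\in\R\}.
\]
I would proceed in three steps.

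\emph{Step 1 (key lemma).} If $X_i\in\mathfrak g$, $X_i\neq 0$, $X_i\to 0$, $\exp(X_i)\in H$ and $X_i/|X_i|\to X$, then $X\in\mathfrak h$. To see this, fix $t$ and put $k_i=\lfloor t/|X_i|\rfloor$. Then $\exp(k_iX_i)=\exp(X_i)^{k_i}\in H$ and $k_iX_i\to tX$, so $\exp(tX)\in H$ by closedness.

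\emph{Step 2 ($\mathfrak h$ is a subspace).} Closure under scalars is immediate. For $X,Y\in\mathfrak h$, write $\exp(tX/k)\exp(tY/k)=\exp(Z_k)$ with $Z_k=\tfrac tk(X+Y)+O(k^{-2})$. Then $kZ_k\to t(X+Y)$, and $\exp(kZ_k)=\bigl(\exp(tX/k)\exp(tY/k)\bigr)^k\in H$. Closedness gives $\exp(t(X+Y))\in H$.

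\emph{Step 3 (slice chart at $e$).} Choose a complement $\mathfrak g=\mathfrak h\oplus\mathfrak m$. The map $\Phi(Y,Z)=\exp(Y)\exp(Z)$ has differential equal to the identity at $0$, so it is a local diffeomorphism near $0$. I claim there is a neighborhood $V$ of $0$ with $H\cap\Phi(V)=\Phi\bigl(V\cap(\mathfrak h\times\{0\})\bigr)$.

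The inclusion $\supseteq$ is clear from the definition of $\mathfrak h$. Suppose the reverse inclusion fails for every $V$. Then there are $h_i=\exp(Y_i)\exp(Z_i)\in H$ with $(Y_i,Z_i)\to 0$ and $Z_i\neq0$. Since $\exp(Y_i)\in H$, it follows that $\exp(Z_i)\in H$. Pass to a subsequence along which $Z_i/|Z_i|$ converges to a unit vector $Z\in\mathfrak m$. Step 1 then gives $Z\in\mathfrak h\cap\mathfrak m=\{0\}$, which is a contradiction.

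Thus $\Phi^{-1}$ restricted to $\Phi(V)$ is a slice chart for $H$ at $e$. Composing with left translation by $h\in H$ maps $H$ to itself and yields slice charts at every point of $H$. Hence $H$ is an embedded submanifold, and by the easy direction above it is an embedded Lie subgroup.

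\textbf{Main obstacle.} Step 3 is the crux. It must rule out elements of $H$ that approach $e$ transversally to $\mathfrak h$, and this works only because the rescaling argument of Step 1 converts such a sequence into a direction lying in $\mathfrak h\cap\mathfrak m=\{0\}$. Both Step 1 and Step 3 depend on closedness of $H$.
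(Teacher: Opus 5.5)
Your proof is correct. The paper gives no argument of its own for this statement and only cites Lee's textbook, and what you wrote is essentially the standard proof found there: the subspace $\mathfrak h$ of directions of one-parameter subgroups contained in $H$, the rescaling lemma, and the slice chart $(Y,Z)\mapsto\exp(Y)\exp(Z)$ obtained by contradiction, together with the routine fact that a locally closed subgroup is closed. One small point is worth stating explicitly: in Step 3, take the neighborhoods $V$ inside a fixed neighborhood of $0$ on which $\Phi$ is a diffeomorphism, which your shrinking-sequence argument already permits.
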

By Theorem \ref{theorem: closed subgroup theorem}, a Lie group $G$ is connected if, for every $g,h\in G$ there exists a continuous map $\gamma:[0,1]\to G$ with $\gamma(0)=g$ and $\gamma(1)=h$.

A metric $\rho$ on $G$ is called \textbf{right-invariant} if for all $g,h,a\in G$ it holds $\rho(ga,ha)=\rho(g,h)$.
Every connected Lie group admits a right-invariant Riemannian metric tensor $\gg$. The Riemannian distance induced by $\gg$ is then right-invariant in the above sense.

\subsection{Geodesic completeness}
Suppose $(M,\gg)$ is a Riemannian manifold, where $M$ is a smooth manifold and $\gg$ is a Riemannian metric tensor. $(M,\gg)$ is said to be \textbf{geodesically complete} if every maximal geodesic is defined for all time. Recall a geodesic $\gamma:I\to M$ is maximal if it cannot be extended to a geodesic on an interval larger than $I$. $(M,\gg)$ is said to be \textbf{metrically complete} if it is complete as a metric space with respect to the Riemannian metric, i.e. every Cauchy sequence in $M$ converges. 
\begin{theorem}\label{theorem:Hopf-Rinow}(Hopf--Rinow, see \cite[Theorem 6.19]{Lee03})
    A connected Riemannian manifold is metrically complete if and only if it is geodesically complete.
\end{theorem}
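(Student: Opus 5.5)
We follow the classical route of Hopf and Rinow, proving both implications through a fixed base point $p\in M$ and the exponential map $\exp_p$, which is defined on an open star-shaped neighbourhood of $0$ in $T_pM$. Throughout we use the standard local facts: geodesic normal balls exist, and inside a sufficiently small geodesic ball $B_\delta(p)=\exp_p(B_\delta(0))$ the radial geodesic from $p$ is the unique length-minimizing curve (Gauss lemma). Connectedness of $M$ guarantees that the Riemannian distance $\rho$ is a genuine metric inducing the manifold topology.

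\emph{Metric completeness implies geodesic completeness.} Let $\gamma:[0,b)\to M$ be a unit-speed geodesic with $b<\infty$. For $t_k\uparrow b$ we have $\rho(\gamma(t_k),\gamma(t_l))\le |t_k-t_l|$, so $(\gamma(t_k))$ is Cauchy and converges to some $q\in M$. Choose a neighbourhood $W$ of $q$ and $\varepsilon>0$ such that every geodesic starting in $W$ with unit speed exists at least on $(-\varepsilon,\varepsilon)$; such uniformly normal neighbourhoods exist by smooth dependence of geodesics on initial data. Taking $t_k>b-\varepsilon$ with $\gamma(t_k)\in W$, the geodesic through $\gamma(t_k)$ with velocity $\gamma'(t_k)$ extends $\gamma$ beyond $b$, by uniqueness of geodesics. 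Hence every maximal geodesic is defined on all of $\R$.

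\emph{Geodesic completeness implies metric completeness.} This is the main step. The key lemma is: if $\exp_p$ is defined on all of $T_pM$, then every $q\in M$ is joined to $p$ by a minimizing geodesic. To prove it, set $r=\rho(p,q)$ and choose a small normal sphere $S_\delta(p)$. By compactness, $\rho(\cdot,q)$ attains its minimum on $S_\delta(p)$ at some point $\exp_p(\delta v)$ with $|v|=1$. Let $\gamma(t)=\exp_p(tv)$, which is defined for all $t$ by assumption, and consider
\[
T=\{t\in[0,r]:\ \rho(\gamma(t),q)=r-t\}.
\]
The set $T$ is closed and contains $\delta$. We show that it is open to the right: if $t_0\in T$ with $t_0<r$, apply the same sphere argument at $\gamma(t_0)$ with a small radius $\delta'$, and use the triangle inequality to obtain a point $x$ with $\rho(p,x)=t_0+\delta'$. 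The concatenated broken curve from $p$ to $x$ then has length equal to $\rho(p,x)$, so it is minimizing and therefore an unbroken geodesic; hence $x=\gamma(t_0+\delta')$ and $t_0+\delta'\in T$. It follows that $r\in T$, so $\gamma(r)=q$.

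\emph{Conclusion.} Granting the lemma, a bounded set $A\subset M$ lies in some ball $\{\rho(p,\cdot)\le R\}$, which by the lemma is contained in $\exp_p(\overline{B_R(0)})$. This set is compact as the continuous image of a compact set. Thus bounded closed sets are compact, and a Cauchy sequence, being bounded, has a convergent subsequence and hence converges. The anticipated obstacle is the continuity argument in the key lemma, in particular the justification that the broken minimizing curve is smooth. We handle this by appealing to the standard fact that length-minimizing piecewise-smooth curves parametrized by arc length are geodesics, which follows from the local uniqueness of minimizers in normal balls.
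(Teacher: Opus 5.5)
Your proposal is correct and follows essentially the standard proof in the reference the paper cites, \cite[Theorem 6.19]{Lee03}; the paper itself gives no proof of its own. As there, one direction uses Cauchy convergence of $\gamma(t_k)$ together with a uniformly normal neighbourhood, and the other uses the minimizing-geodesic lemma via the set $T$, followed by compactness of $\exp_p(\overline{B_R(0)})$. The one step you leave implicit is why $\delta\in T$, and likewise why $\rho(x,q)=r-t_0-\delta'$ at $\gamma(t_0)$: every curve from the centre to $q$ must cross the small normal sphere, which gives $\min_{S_\delta(p)}\rho(\cdot,q)=r-\delta$. This is the standard sphere argument and causes no difficulty.
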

Every Lie group equipped with a right-invariant Riemannian metric is metrically complete with respect to the induced Riemannian distance. Hence, by the Hopf–Rinow theorem, it is geodesically complete.

\subsection{Topological dynamical systems}\label{subsection:topological dynamical systems}
A \textbf{topological dynamical system} (TDS) is a pair $(X,G)$ equipped with a continuous action $G\times X\to X$, where $X$ is a compact metrizable space and $G$ is a connected Lie group. One refers to $(X,G)$ as a $G$-\textbf{action}. $\R$-actions are sometimes referred to as \textbf{continuous flows}.
The set of \textbf{fixed points} of $(X,G)$ is denoted by $\fix(X, G)= \{x\in X\mid gx = x \, (\forall g\in G)\}$.
Define the \textbf{stabilizer of} $x\in X$ by $G_x=\{g\in G: gx=x\}$, which is a closed subgroup of $G$.
A morphism between two $G$-actions $(X,G)$ and $(Y,G)$ is given by a continuous map $\varphi: X\to Y$ which is $G$-\textbf{equivariant}, i.e. $\varphi(gx)=g\varphi(x)$ for all $g\in G$ and $x\in X$. If $\varphi$ is an injective morphism, it is called a $G$-\textbf{equivariant embedding}. 

\subsection{Spaces of Lipschitz functions}\label{subsection:spaces}
Let $G$ be a connected Lie group and $\rho$ be a right-invariant Riemannian distance on $G$. Fix a right-invariant compatible metric $d$ on $G$.

 A function $u:G\to \R$ is called \textbf{one-Lipschitz} (w.r.t. $d$) if $|u(g)-u(h)|\leq d(g,h)$ for all $g,h\in G$.
Let $I$ be the unit interval and $\lip_1(G,I)$ be the space of functions $u: G\to I$ which are one-Lipschitz (w.r.t. $d$).
Equip $\lip_1(G,I)$ with the compact-open topology. Then $\lip_1(G,I)$ is compact.

For $a\in G$, we define the \textbf{right argument shift} $R_a$ on  $C(G), C_b(G)$ and $\lip_1(G,I)$ by
\[(R_au)(g)=u(ga).\]

Denote by $C(X)$ the space of continuous functions from $X$ to $\R$. For $f\in C(X)$, define $J_f: X\to C(G)$  by $J_f(x)(g)=f(gx)$.

We use the compatible metric $d$ to define Lipschitz continuity, while the Riemannian distance $\rho$ serves as an auxiliary metric throughout the proof. The following lemma allows us to deal with these two right-invariant metrics flexibly.

\begin{lemma}\label{lemma: comparison between compatible metrics}
Let $G$ be a connected Lie group. Let $\rho$ be a right-invariant Riemannian distance on $G$ and $d$ be a compatible right-invariant metric on $G$.  There exist
$c,r,\eta>0$ such that for all $g,h\in G$
\begin{align}
 \rho(g,h)<r&\quad\Longrightarrow\quad d(g,h)\ge c\rho(g,h),
 \label{eq:local-metric-comparison}\\
 \rho(g,h)\ge r&\quad\Longrightarrow\quad d(g,h)\ge\eta.
 \label{eq:far-metric-comparison}
\end{align}
\end{lemma}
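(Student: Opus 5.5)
By right-invariance of both metrics, $\rho(g,h)=\rho(gh^{-1},e)$ and $d(g,h)=d(gh^{-1},e)$. So it suffices to prove both inequalities with $h=e$, that is, for the functions $g\mapsto\rho(g,e)$ and $g\mapsto d(g,e)$.

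For \eqref{eq:far-metric-comparison}, the plan is a compactness argument on an annulus. First fix $r>0$ small enough that the closed ball $\overline{B}_\rho(e,2r)$ is compact. This is possible because $\rho$ is a Riemannian distance, so it induces the manifold topology and the manifold is locally compact; by Hopf--Rinow (Theorem \ref{theorem:Hopf-Rinow}) closed $\rho$-balls are in fact compact for every radius, but small $r$ is all we need.

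Suppose \eqref{eq:far-metric-comparison} fails for every $\eta>0$. Then there are $g_k$ with $\rho(g_k,e)\ge r$ and $d(g_k,e)\to0$. Since $d$ is compatible with the topology of $G$, $d(g_k,e)\to 0$ forces $g_k\to e$ in $G$, and hence $\rho(g_k,e)\to0$, which is a contradiction. This already gives \eqref{eq:far-metric-comparison} for whatever $r$ we choose; no compactness is needed here, only compatibility of both metrics with the same topology.

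The inequality \eqref{eq:local-metric-comparison} is the real content, and it does not follow from topology alone. A general compatible metric, such as $d=\sqrt{\rho}$ (if right-invariant), need not be locally bi-Lipschitz to $\rho$. However, \eqref{eq:local-metric-comparison} only asks for the one-sided bound $d\ge c\rho$ near the diagonal. Even so, compatibility does not rule out something like $d=\rho^2$-type behaviour near $e$, although $\rho^2$ is not a metric because the triangle inequality fails. My plan is to use the triangle inequality for $d$ together with the group structure, which forces $d$ to be at least linear along one-parameter subgroups. For $X\in\mathfrak g$ and $N\in\N$, right-invariance and the triangle inequality give
\[
d(\exp(X),e)\le \sum_{j=1}^{N} d(\exp(jX/N),\exp((j-1)X/N))=N\,d(\exp(X/N),e),
\]
so $d(\exp(tX),e)\ge t\,d(\exp(X),e)$ for $t=1/N$. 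Monotonicity-type interpolation then extends this to all $t\in(0,1]$ up to a constant.

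Next, let $S\subset\mathfrak g$ be a small sphere on which $\exp$ is a diffeomorphism onto its image. By \eqref{eq:far-metric-comparison}-type compactness, $\min_{X\in S} d(\exp X,e)=:m>0$. Comparing with the fact that $\rho(\exp(tX),e)\approx t|X|$ near $e$ (normal-coordinate estimate: $\rho(\exp(Y),e)\le C|Y|$ for small $Y$) gives $d(\exp(tX),e)\ge c\,\rho(\exp(tX),e)$ for $t\in(0,1]$ and $X\in S$. Such elements cover a $\rho$-neighbourhood $B_\rho(e,r)$, after possibly shrinking $r$, which yields \eqref{eq:local-metric-comparison}.

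The main obstacle is the interpolation from $t=1/N$ to general $t$. I would handle it by writing $t\in[1/(N+1),1/N]$ and applying the bound to the rescaled vector $X'=NtX$. The norm of $X'$ lies in $[\tfrac{N}{N+1}|X|,|X|]$, so it stays in a compact annulus of $\mathfrak g$ on which $d\circ\exp$ is bounded below by a positive minimum. This gives $d(\exp(tX),e)\ge m'/(N+1)\ge (m'/2)\,t$ uniformly.
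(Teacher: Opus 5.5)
Your proposal is correct and follows essentially the same route as the paper. Right-invariance plus the triangle inequality give $d(e,\exp(kY))\le k\,d(e,\exp Y)$, and compactness of an annulus in $\mathfrak g$ on which $\exp$ is injective gives a positive lower bound for $d\circ\exp$. The paper takes $k=\lceil a/\|Y\|\rceil$ directly, which amounts to your choice of $N$ with the rescaled vector $NtX$. The estimate $\rho(e,\exp Y)\le C_0\|Y\|$ then converts this into $d\ge c\rho$, and \eqref{eq:far-metric-comparison} follows from compatibility of $d$ and $\rho$ at $e$, applied after $r$ has been fixed by the local argument; your sequence argument allows this, since it works for every $r$.
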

\begin{proof}
   Let $\mathfrak g=T_eG$ be the Lie algebra of $G$.  Equip
$\mathfrak g$ with the norm induced by the Riemannian metric at $e$.
Choose $a>0$ small enough such that the $\exp:\mathfrak g\to G$ is injective on $\{Y\in\mathfrak{g}:\|Y\|\leq 2a\}$ and there exists $C_0\geq1$ satisfying $ \rho(e,\exp Y)\leq C_0\|Y\|$ if $\|Y\|\leq 2a$.
Then
\[
 \delta_0:=\min_{a\le\|Y\|\le2a}d(e,\exp Y)>0.
\]

If $0<\|Y\|<a$, set $k=\lceil a/\|Y\|\rceil$.  Then
$a\le k\|Y\|<2a$ and thus
\begin{align*}
 \delta_0
 &\le d(e,\exp(kY))
   =d(e,(\exp Y)^k)\\
 &\le\sum_{j=0}^{k-1}d((\exp Y)^j,(\exp Y)^{j+1})\\
 &=k d(e,\exp Y)
  \le \frac{2a}{\|Y\|}d(e,\exp Y).
\end{align*}
The last equality uses the fact $d$ is right-invariant.
Since $ \rho(e,\exp Y)\leq C_0\|Y\|$,
\[
 d(e,\exp Y)\ge
 \frac{\delta_0}{2a}\|Y\|
 \ge \frac{\delta_0}{2aC_0}\rho(e,\exp Y).
\]
Choose $r>0$ small enough such that
$\rho(g,e)<r$ yields $g=\exp Y$ for some $Y\in\mathfrak{g}$ with $\|Y\|\leq a$. Denote $c=\delta_0/(2aC_0)$. Then
$d(e,g)\geq c\rho(e,g)$ if $\rho(e,g)<r$. Conclude \eqref{eq:local-metric-comparison} holds. Indeed, since both $\rho$ and $d$ are right-invariant, if $\rho(g,h)=\rho(e,hg^{-1})<r$, we have $d(g,h)=d(e,hg^{-1})\geq c\rho(e,hg^{-1})=c\rho(g,h)$.

Since $d$ and $\rho$ are compatible, 
there exists $\eta>0$ such that $d(g,h)<\eta$ yields $\rho(g,h)<r$.  Consequently, \eqref{eq:far-metric-comparison} holds.
\end{proof}

\subsection{Miscellaneous}
\begin{lemma}(Scheff\'{e}, \cite{Sche47})\label{lemma:Scheffe}
    Let $f_n,f\in L^1(G,\mu)$ be non-negative functions. If $f_n\to f$ almost everywhere and
    \[\int_G f_nd\mu\to\int_G f d\mu\]
    as $n\to\infty$, then $f_n\to f$ in $L^1(G,\mu)$.
\end{lemma}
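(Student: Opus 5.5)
This is Scheffé's lemma, stated here for the measure space $(G,\mu)$. The plan is the classical argument, which needs nothing about the Lie structure beyond $\mu$ being a measure on $G$. The idea is to control the $L^1$ norm of $f_n-f$ by the negative part of $f-f_n$ and then use dominated convergence.

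First, use the identity $|a-b| = (a-b) + 2(b-a)^+$ for real $a,b$. Applying it pointwise with $a=f_n$ and $b=f$ and integrating gives
\[
\int_G |f_n-f|\,d\mu=\int_G (f_n-f)\,d\mu+2\int_G (f-f_n)^+\,d\mu .
\]
All integrals are finite because $f_n,f\in L^1(G,\mu)$. By hypothesis the first term on the right tends to $0$.

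For the second term, observe that $0\le (f-f_n)^+\le f$ because $f_n\ge 0$, and $f$ is integrable. Moreover $(f-f_n)^+\to 0$ $\mu$-almost everywhere, since $f_n\to f$ almost everywhere and $t\mapsto t^+$ is continuous. The dominated convergence theorem then yields $\int_G (f-f_n)^+\,d\mu\to 0$.

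Combining the two limits gives $\|f_n-f\|_{L^1(G,\mu)}\to 0$. There is no real obstacle here. The only point requiring care is the domination $(f-f_n)^+\le f$, and this is exactly where the nonnegativity of $f_n$ is used.
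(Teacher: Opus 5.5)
Your proof is correct. The identity $|f_n-f|=(f_n-f)+2(f-f_n)^+$, combined with dominated convergence applied to $0\le (f-f_n)^+\le f$, is the classical proof of Scheffé's lemma. The paper itself gives no proof and only cites \cite{Sche47}, so your argument supplies the standard reasoning behind that citation, and nothing specific to $G$ is needed.
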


\section{The classical Bebutov-Kakutani theorem for connected Lie group actions}  
\label{Section:classical Bebutov-Kakutani}

\begin{theorem}\label{theorem: embedding into C(G,R)}
    Let $(X, G)$ be a $G$-action where $G$ is a connected Lie group and $\fix(X,G)$ embeds into $\R$. Then there exists $f\in C(X)$ such that $J_f:X\to C(G)$ is a $G$-equivariant embedding.
\end{theorem}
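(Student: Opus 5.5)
This is the classical Bebutov--Kakutani theorem for connected Lie groups, essentially Chen's result. I would prove it by a Baire category argument in $C(X)$ with the sup norm. Equivariance of $J_f$ is automatic: $J_f(hx)(g)=f(ghx)=(R_hJ_f(x))(g)$. Continuity into $C(G)$ with the compact-open topology follows from uniform continuity of $(g,x)\mapsto f(gx)$ on $K\times X$ for compact $K\subset G$. So only injectivity needs work.

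Fix an embedding $\phi\colon\fix(X,G)\to\R$, and extend it by Tietze to some $f_0\in C(X)$. Restrict attention to the closed affine subspace
\[
\mathcal A=\{f\in C(X): f|_{\fix(X,G)}=\phi\},
\]
which is a complete metric space. For $f\in\mathcal A$, injectivity fails at a pair $(x,y)$ with $x\neq y$ exactly when $f(gx)=f(gy)$ for all $g\in G$. If both points are fixed, this is impossible since $\phi$ is injective. So it suffices to handle pairs $(x,y)$ with $x\neq y$ and $x\notin\fix(X,G)$, after swapping if necessary.

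Exhaust this set by countably many compact sets $K_m\subset X\times X\setminus\Delta$, each lying in a product of small closed sets, and take a countable dense set $\{g_k\}\subset G$. Define
\[
U_m=\{f\in\mathcal A:\ \forall (x,y)\in K_m\ \exists g\in G,\ f(gx)\neq f(gy)\}.
\]
By compactness, it suffices to use finitely many $g$, so $U_m$ is open. The main task is to show $U_m$ is dense; Baire then gives $f\in\bigcap_m U_m$, and this $f$ works.

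For density, the mechanism is the following. Near a non-fixed point $x$, choose a small submanifold $S\subset G$ through $e$ (one-parameter subgroups) such that $s\mapsto sx$ is nonconstant. Connectedness of $G$ is essential here: $x$ is not fixed by $G$, hence not fixed by some $\exp(tY)$, since $G$ is generated by $\exp$ of a neighbourhood of $0$.

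Choose $N$ points $s_1,\dots,s_N$ with $N>2\dim$-style counting. Then perturb $f$ by $\sum_i c_i\psi_i$, where the $\psi_i$ are bump functions supported away from $\fix(X,G)$ so that $\mathcal A$ is preserved. The goal is that the map $(x,y)\mapsto(f(s_ix)-f(s_iy))_i$ avoids $0$ on $K_m$ for generic coefficients. The cleanest route, following Kakutani and Jaworski, uses a local cross-section argument: the orbit direction makes the values $f(\exp(tY)x)$ for $t$ in an interval an infinite-dimensional family. Then a one-dimensional flow argument, Kakutani's proof for $\R$, applied along the flow $t\mapsto\exp(tY)$ restricted to points where it acts nontrivially, gives density.

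Concretely, I would cover the non-fixed set by countably many open sets $W_j$, each with a $Y_j\in\mathfrak g$ such that $\exp(tY_j)$ moves every point of $\overline{W_j}$. I would then apply the $\R$-flow Bebutov--Kakutani perturbation lemma for $\R$-actions (GJT19/Kakutani) to the flow $\exp(tY_j)$ on pairs in $K_m$ with $x\in\overline W_j$. This requires the case $y$ fixed by that flow but $x$ not, and the case that both move, to be handled as in Kakutani.

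The main obstacle is this density step. Specifically, one must handle pairs where $y$ is fixed by the chosen one-parameter subgroup, or where the orbits of $x$ and $y$ interact, while keeping the perturbation supported off $\fix(X,G)$. Points with nontrivial stabilizers also need care. I would first try to reduce to the $\R$-flow $\exp(tY_j)$ and invoke the known $\R$ result, namely the Bebutov--Kakutani theorem for $\R$ with prescribed values on fixed points. I would check that the set of $\exp(\R Y_j)$-fixed points inside $\overline W_j$ is empty, so the embedding condition is vacuous there. This yields functions separating the relevant pairs, and the genericity of such functions gives density of $U_m$.
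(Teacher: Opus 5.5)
Your architecture matches the paper's: a Baire argument in $\mathcal A=\{f\in C(X): f|_{\fix(X,G)}=\phi\}$, a reduction to one-parameter subgroups, and a Kakutani-type perturbation along a flow $t\mapsto\exp(tY)$. The soft steps are fine: equivariance, continuity, openness of $U_m$ by compactness, separation of two fixed points by $\phi$, and existence of the $W_j$ because a non-fixed point is moved by some $\exp(\R Y)$.

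The gap is the density of $U_m$. You flag it as the main obstacle but do not prove it, and the shortcut you suggest does not work. The Bebutov--Kakutani theorem for $\R$ is a statement about the flow on all of $X$. Its hypothesis is that the whole set $\fix(X,\exp(\R Y_j))$ embeds into $\R$. Since $\overline{W_j}$ is not invariant, you cannot restrict the flow to it, so emptiness of the fixed set inside $\overline{W_j}$ buys nothing. That global fixed set can be too big for \emph{every} $Y$ even when $\fix(X,G)=\emptyset$. For example, let $\R^2$ act on $X=[0,\pi]\times\R/\Z$ by $(s,t)\cdot(\theta,z)=(\theta,z+s\cos\theta+t\sin\theta)$. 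Then no point is $G$-fixed, yet every nonzero $Y$ fixes pointwise a circle $\{\theta_Y\}\times\R/\Z$, which does not embed into $\R$. Moreover, prescribing values on all of $\fix(X,\exp(\R Y_j))$ would put you in a space depending on $j$, not in your single Baire space $\mathcal A$. Your first heuristic, counting finitely many sample points $s_1,\dots,s_N$ against dimension, also cannot work, since $X$ may be infinite-dimensional.

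What is needed is a \emph{relative, local} perturbation lemma for one flow $T$; this is exactly Lemma~\ref{lemma:one-dimensional flow}. Values are prescribed only on a closed set $A\subset\fix(X,T)$. Here $A=\fix(X,G)$, usually much smaller than $\fix(X,T)$, and with $h=\phi$ the space $\CC_{A,h}$ is your $\mathcal A$. Separation is required only for pairs $(x_0,y_0)$ with $x_0\neq y_0$ and $x_0\notin\fix(X,T)$. The proof runs as follows.
\begin{enumerate}
\item Take a flow box at $x_0$ with maps $r,s$. Put $S_0=s(\overline U)$ and $B=\{T_{-r(x)}y: x\in\overline U,\ y\in\overline V\}$, and shrink $U,V$ so that $S_0\cap B=\emptyset$.
\item Choose $J=[-\delta,\delta]$ so small that the tubes $C_1=T_J(S_0)$ and $C_2=T_J(B)$ are disjoint. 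The tube $C_1$ misses $\fix(X,T)\supset A$.
\item Given $f\in\CC_{A,h}$ and $\varepsilon>0$, the set $D_f=\{f(T_{\cdot}b)-f(T_{\cdot}s): s\in S_0,\ b\in B\}$ is compact in the infinite-dimensional space $C(J)$. By Riesz's lemma there is $v\in C(J)\setminus D_f$ with $\|v\|_\infty<\varepsilon$.
\item Add to $f$ a Tietze extension of $T_ts\mapsto v(t)$ on $C_1$ that vanishes on $C_2\cup A$. This stays in $\CC_{A,h}$ and separates every pair of $\overline U\times\overline V$ along $T$-orbits.
\end{enumerate}
No case split on whether $y_0$ is moved is needed, no stabilizer analysis is needed, and the embedding hypothesis enters only at the very end.

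With this lemma your $U_m$ are dense, since separating along $\exp(tY_j)$ suffices. The adapted $Y_j$ then become superfluous: the paper applies the lemma to the flows of a fixed basis $v_1,\dots,v_n$ of $\mathfrak g$ and uses $\fix(X,G)=\bigcap_i\fix(X,\exp(\R v_i))$, proved via the closed subgroup theorem.
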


\begin{remark}
    Chen \cite[Theorem 2]{Chen75a} extended the classical Bebutov--Kakutani theorem \cite{Kak68} for $\R$-flows to the context of connected Lie group actions, showing that a $G$-action on $X$ admits a $G$-equivariant embedding into the right argument shift on $C(G)$ if and only if $\fix(X,G)$ topologically embeds into $\R$. This result directly yields Theorem \ref{theorem: embedding into C(G,R)}. Indeed, assuming that $\fix(X,G)$ embeds into $\R$, Chen's theorem guarantees the existence of a $G$-equivariant embedding $\Phi: X\to C(G)$. Define $f\in C(X)$ by $f(x)=\Phi(x)(e), x\in X$. Then for all $x\in X$ and $g\in G$ we have $J_f(x)(g)=f(gx)=\Phi(gx)(e)=\Phi(x)(g)$ and thus $J_f=\Phi$. The original proof of the Bebutov--Kakutani theorem relies on two technical lemmas \cite[Lemmas 1 and 2]{Kak68}. Chen stated that these lemmas remain valid for connected Lie group actions,  thereby establishing the desired generalization. Below we give a self-contained proof of Theorem \ref{theorem: embedding into C(G,R)} for the sake of completeness.
\end{remark}

\subsection{Auxiliary lemmas}
The proof of the following lemma can be found in \cite[Lemma 3.2]{GJT19}, see also \cite[Chapter 13, Corollary 6]{Aus88}.
\begin{lemma}\label{lemma:flow box}
    Let $T:\R\times X\to X$ be a continuous flow on a compact metrizable space $X$. If $x_0\in X\setminus\fix(X,T)$, then there exist $a>0$, a closed set $S\subset X$ containing $x_0$ and an open neighborhood $W$ of $x_0$ such that
    the map
    \begin{equation*}
       [-a,a]\times S\to X,\qquad (t,x)\mapsto T_t x
    \end{equation*}
    is injective and its image contains $W$. Moreover, after shrinking $W$, there are continuous maps
    \begin{equation*}
        r:W\to(-a,a),\qquad s:W\to S
    \end{equation*}
    satisfying $x=T_{r(x)}s(x)$ for every $x\in W$ and $r(x_0)=0,s(x_0)=x_0$. We call $(a,S)$ a \textbf{local section} around $x_0$ and $W$ a \textbf{flow box} for $S$.
\end{lemma}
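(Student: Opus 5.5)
We prove the flow-box lemma (Lemma \ref{lemma:flow box}) by the classical construction of a local cross-section from an averaged transversal function, following Whitney and Bebutov. Since $x_0\notin\fix(X,T)$, we fix $\tau>0$ with $T_\tau x_0\neq x_0$. Urysohn then gives a continuous $\phi\colon X\to[0,1]$ with $\phi=1$ near $x_0$ and $\phi=0$ near $T_\tau x_0$. Set
\[
F(x)=\int_0^{\tau}\phi(T_sx)\,ds .
\]
Along orbits, $F$ is differentiable in $t$, with
\[
\frac{d}{dt}F(T_tx)=\phi(T_{t+\tau}x)-\phi(T_tx).
\]
At $t=0$, $x=x_0$ this derivative equals $0-1=-1$. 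By continuity there are a neighborhood $W_0$ of $x_0$ and $b>0$ such that $\frac{d}{dt}F(T_tx)\le-\tfrac12$ for all $x\in W_0$ and $|t|\le 2b$.

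Define $S=\{x\in \overline{W_1}: F(x)=F(x_0)\}$, where $W_1\Subset W_0$ is a smaller closed neighborhood chosen so that $T_tx\in W_0$ for $x\in \overline{W_1}$ and $|t|\le 2b$. The set $S$ is closed and contains $x_0$. Take $a\le b$.

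\textbf{Injectivity.} Suppose $T_tx=T_{t'}x'$ with $x,x'\in S$ and $t,t'\in[-a,a]$. Then $x'=T_{t-t'}x$ with $|t-t'|\le 2a$. The function $s\mapsto F(T_sx)$ is strictly decreasing on $[-2b,2b]$, and $F(x')=F(x)$, so $t=t'$ and hence $x=x'$.

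\textbf{Image and the maps $r,s$.} For $x$ near $x_0$, the function $t\mapsto F(T_tx)$ is strictly decreasing on $[-a,a]$ with slope $\le-\tfrac12$. Also $F(T_{\pm a}x)$ is close to $F(T_{\pm a}x_0)$, which is at distance $\ge a/2$ from $F(x_0)$. So for $x$ in a small neighborhood $W$ there is a unique $\sigma(x)\in(-a,a)$ with $F(T_{\sigma(x)}x)=F(x_0)$. Set $r(x)=-\sigma(x)$ and $s(x)=T_{\sigma(x)}x$. Shrinking $W$ so that $s(x)\in \overline{W_1}$ gives $s(x)\in S$ and $x=T_{r(x)}s(x)$. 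Thus $W$ lies in the image of $[-a,a]\times S$.

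\textbf{Continuity of $\sigma$.} We use strict monotonicity together with uniform continuity of $(t,x)\mapsto F(T_tx)$. Given $\epsilon>0$, we have $F(T_{\sigma(x)\pm\epsilon}x)\lessgtr F(x_0)$ by a margin of at least $\epsilon/2$. These strict inequalities persist for $y$ near $x$, which forces $|\sigma(y)-\sigma(x)|<\epsilon$. Finally, $r(x_0)=0$ and $s(x_0)=x_0$ because $F(x_0)=F(x_0)$ and $\sigma$ is unique.

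The main obstacle is the bookkeeping of neighborhoods. One must ensure that $F$ is strictly monotone along orbit segments starting anywhere in the relevant sets, so that injectivity holds even on the closed set $S$. This is handled by working inside $W_0$, where the derivative bound is uniform, and choosing $a$ and $W_1$ after $b$.
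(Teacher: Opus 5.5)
Your proof is correct. It is the classical Whitney--Bebutov construction: the local section is a level set of the averaged function $F(x)=\int_0^\tau\phi(T_sx)\,ds$, whose slope along orbits is at most $-\tfrac12$ near $x_0$. From this you correctly get that $S$ is closed, that $[-a,a]\times S\to X$ is injective, that $\sigma$ exists, is unique and is continuous, and that $r(x_0)=0$ and $s(x_0)=x_0$. The paper gives no proof of this lemma and simply refers to \cite[Lemma 3.2]{GJT19} and \cite[Chapter 13, Corollary 6]{Aus88}, so your write-up is a self-contained version of the standard argument.

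Two bookkeeping points should be tidied.

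\begin{itemize}
\item The condition you impose on $W_1$, that $T_tx\in W_0$ for $x\in\overline{W_1}$ and $|t|\le 2b$, in particular forces $T_tx_0\in W_0$ for all $|t|\le 2b$. That need not hold for the $b$ you already fixed. The condition is never used, though: injectivity and the construction of $\sigma$ only need $\overline{W_1}\subset W_0$ with $x_0$ in the interior of $W_1$. Alternatively, shrink $b$ first.
\item The step ``shrink $W$ so that $s(x)\in\overline{W_1}$'' uses continuity of $s$ at $x_0$, which you only prove in the next paragraph, so the continuity step should come first. In that step take $\epsilon<b$. Then $\sigma(x)\pm\epsilon\in(-2b,2b)$, where $t\mapsto F(T_ty)$ is strictly decreasing for every $y\in W_0$. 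The root produced by the intermediate value theorem in $(\sigma(x)-\epsilon,\sigma(x)+\epsilon)$ therefore coincides with $\sigma(y)$.
\end{itemize}
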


The following lemma is tailored to our proof. The underlying idea is that Theorem \ref{theorem: embedding into C(G,R)} can be reduced to the one-dimensional case, relying on the fact that a connected Lie group contains an abundance of one-parameter subgroups.

\begin{lemma}\label{lemma:one-dimensional flow}(cf. \cite[Lemma 1]{Chen75b})
    Let $T:\R\times X\to X$ be a continuous flow on a compact metrizable space $X$. Denote
    \[\Omega:=X\times X\setminus\left((\fix(X,T)\times\fix(X,T))\cup\Delta_X\right),\]
    where $\Delta_X=\{(x,x):x\in X\}$ is the diagonal of $X\times X$.
    Let $A\subset \fix(X,T)$ be closed and $h: A\to\R$ be continuous. Define $\CC_{A,h}:=\{f\in C(X): f\vert_{A}=h\}$ and
    \begin{equation*}
        \RR:=\{f\in \CC_{A,h}: \text{ for any } (x,y)\in \Omega, \text{ there exists }t\in\R\text{ with }f(T_tx)\neq f(T_ty)\}.
    \end{equation*}
    Then $\RR$ is a dense $G_\delta$ subset of $\CC_{A,h}$.

\end{lemma}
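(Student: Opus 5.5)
Throughout, write $\CC=\CC_{A,h}$. I will use the Baire category theorem on $\CC$. First, $\CC$ is nonempty by the Tietze extension theorem. It is a closed affine subspace of the Banach space $(C(X),\|\cdot\|_\infty)$, hence a complete metric space. The set $\Omega$ is an open subset of the compact metrizable space $X\times X$, so it is $\sigma$-compact. Write $\Omega=\bigcup_{m}K_m$ with $K_m$ compact. The plan is to show that $\RR=\bigcap_m \RR_m$, where $\RR_m$ is the set of $f\in\CC$ such that for every $(x,y)\in K_m$ there is $t\in\R$ with $f(T_tx)\neq f(T_ty)$, and that each $\RR_m$ is open and dense in $\CC$.

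\textbf{Openness.} Fix $f\in\RR_m$. For each $(x,y)\in K_m$ pick $t_{x,y}$ with $f(T_{t_{x,y}}x)\ne f(T_{t_{x,y}}y)$. By continuity the strict inequality $|f(T_{t_{x,y}}x')-f(T_{t_{x,y}}y')|>\varepsilon_{x,y}$ holds on a neighborhood of $(x,y)$. Compactness of $K_m$ gives finitely many such neighborhoods, with times $t_1,\dots,t_N$ and a uniform $\varepsilon>0$. Any $f'$ with $\|f'-f\|_\infty<\varepsilon/2$ then still separates every pair in $K_m$ at one of these times. So $\RR_m$ is open.

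\textbf{Density.} This is the main step. Fix $f\in\CC$ and $\delta>0$; I will find $f'\in\RR_m$ with $\|f-f'\|<\delta$. I use a local compactness argument on $K_m$. It suffices to show that every $(x_0,y_0)\in\Omega$ has a compact neighborhood $L$ such that the set $\RR_L$ of functions in $\CC$ separating every pair of $L$ at some time is open and dense. A finite cover of $K_m$ by such $L$'s then gives $\RR_m\supset\bigcap \RR_L$, which is dense by Baire. Openness of $\RR_L$ follows as above, so only density of $\RR_L$ remains. I split into cases.

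\emph{Case 1: $x_0\notin\fix(X,T)$.} Take a local section $(a,S)$ and flow box $W$ around $x_0$ from Lemma \ref{lemma:flow box}, with maps $r,s$.

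Subcase (i): $y_0\notin\overline{\{T_tx_0:|t|\le a\}}$, or $y_0$ lies off the orbit arc. Shrink things so that the closed neighborhood $L_x$ of $x_0$, flowed for a short time interval $[-b,b]$, is disjoint from a neighborhood of $y_0$ flowed for the same interval. Perturb $f$ by a small function supported in a thin flow box near $x_0$, vanishing on $A$ (possible because $A\subset\fix$ is disjoint from $W$ after shrinking). Choose the perturbation so that $t\mapsto f'(T_tx)$ on $[-b,b]$ differs from $t\mapsto f'(T_ty)$ for every $x\in L_x$ and every $y$ near $y_0$. One way: add $\epsilon\,\phi(s(z))\psi(r(z))$ with $\psi$ generic. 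Then $t\mapsto f'(T_tx)$ on a subinterval acquires a prescribed shape, while $f'(T_ty)=f(T_ty)$ there.

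A cleaner route is a dimension-free trick. On $[-b,b]$ consider the continuous map $(x,y)\mapsto f(T_\cdot x)-f(T_\cdot y)\in C([-b,b])$. A Baire argument over a one-parameter family $f+\epsilon\psi(r(\cdot))\phi(s(\cdot))$ with $\psi$ ranging over a dense set of $C([-a,a])$ shows generic $\psi$ avoids making this difference identically zero. The reason is that the perturbation term appears only on the $x$-side. Then the difference equals $(\text{old difference})+\epsilon\phi(s(x))\psi(r(x)+\cdot)$, which can vanish identically only for $\psi$ in a nowhere dense set.

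Subcase (ii): $y_0=T_{\tau}x_0$ with $\tau\ne0$ (same orbit). Then $y_0$ is moved along the orbit, and a perturbation whose profile $\psi$ is not $\tau$-periodic-compatible near $0$ separates them. The same argument works with shifts.

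\emph{Case 2: $x_0\in\fix$.} Then $y_0\notin\fix$, so swap roles and use Case 1 with $y_0$.

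The expected obstacle is making the density step rigorous uniformly over a neighborhood $L$. In particular, for pairs on the same orbit the difference $f'(T_\cdot x)-f'(T_\cdot y)$ contains $\psi$ on both sides. There I would argue that $\psi\mapsto \psi(\cdot+r(x))-\psi(\cdot+r(y))$ vanishes only on a closed nowhere dense set of $\psi$ (non-constant-shift-invariant profiles), using compactness of the parameter set of relative shifts bounded away from $0$.
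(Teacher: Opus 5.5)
Your architecture matches the paper's: Baire category in $\CC_{A,h}$, a countable cover of the open set $\Omega$, openness of each piece by compactness, and density via a small perturbation supported in a flow box around the non-fixed coordinate and vanishing on $A$. Your openness argument is correct.

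The gap is in the density step, which is the actual content of the lemma. For a single pair $(x,y)$, the set of profiles $\psi$ that fail to separate it is indeed nowhere dense. But your neighbourhood $L$ contains uncountably many pairs, and per-pair nowhere density says nothing about the union. You need a uniform reason why one small $\psi$ works for all pairs simultaneously, and you do not supply one.

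Your parametrization $\epsilon\,\phi(s(z))\psi(r(z))$ makes this harder. Write $F_{x,y}=f(T_\cdot x)-f(T_\cdot y)$. The bad set $\{\psi:\exists (x,y)\in L,\ \epsilon\phi(s(x))\psi(r(x)+\cdot)=-F_{x,y}\}$ then involves a pair-dependent shift $r(x)$ and amplitude $\phi(s(x))$. Subcase (ii) is worse: $\psi$ enters on both sides with different shifts and amplitudes, since a $y$ near $y_0$ need not lie on the orbit of $x$. Moreover, the equation to be avoided is inhomogeneous, so excluding profiles with $\psi(\cdot+r(x))=\psi(\cdot+r(y))$ is not the right criterion, and this case is left open. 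Also, in Subcase (i), disjointness of the $[-b,b]$-tubes does not make the perturbation ``appear only on the $x$-side'' when $\psi$ ranges over $C([-a,a])$. Indeed, $T_{[-b,b]}y_0$ can meet $T_{(-a,a)}x_0$, e.g.\ for $y_0=T_\tau x_0$ with $a<|\tau|<a+b$.

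The paper avoids all of this by re-centering on the section and using one pair-independent profile. For $x\in\overline{U}$ it compares the orbits at times $t-r(x)$, $t\in J=[-\delta,\delta]$, so that $T_{t-r(x)}x=T_ts(x)$ and $T_{t-r(x)}y=T_tb$ with $b=T_{-r(x)}y$. It chooses $U,V$ so that $\Theta(x_1,x_2,y)=(s(x_1),T_{-r(x_2)}y)$ maps $\overline{U}\times\overline{U}\times\overline{V}$ off $\Delta_X$; this is possible because $\Theta(x_0,x_0,y_0)=(x_0,y_0)$. Then $S_0=s(\overline{U})$ and $B=\{T_{-r(x)}y:x\in\overline{U},y\in\overline{V}\}$ are disjoint compacta. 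Hence for small $\delta$ the tubes $C_1=T_J(S_0)\cong J\times S_0$ and $C_2=T_J(B)$ are disjoint, and $C_1\cap A=\emptyset$. Only $x_0\neq y_0$ is used, so no distinction between same-orbit and different-orbit pairs is needed.

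Now put $q_0(T_ts)=v(t)$ on $C_1$ and $q_0=0$ on $C_2\cup A$, and extend by Tietze. A pair in $\overline{U}\times\overline{V}$ fails to be separated by $f+q$ only if $v=\beta_b-\alpha_{s(x)}$ belongs to $D_f=\{\beta_b-\alpha_s:(s,b)\in S_0\times B\}$. This set is a continuous image of a compact set, hence compact. A compact subset of the infinite-dimensional space $C(J)$ has empty interior, so some $v$ with $\|v\|_\infty<\varepsilon$ avoids $D_f$. This compactness-plus-infinite-dimensionality step, applied to a single profile, is the idea missing from your proposal.
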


\begin{proof}
    Equip $\CC_{A,h}$ with the uniform norm. Since
$A$ is closed in the compact metric space $X$, the Tietze extension
theorem shows that $\CC_{A,h}$ is nonempty. Moreover, $\CC_{A,h}$ is a
closed  subspace of the Banach space $C(X)$, hence is complete and
therefore a Baire space.
    
    Fix $(x_0,y_0)\in\Omega$. Without loss of generality, we assume that $x_0\notin \fix(X,T)$. Let $(a,S)$ be the local section around $x_0$ and $W$ be the flow box for $S$ given by Lemma \ref{lemma:flow box}, with maps $r:W\to(-a,a)$ and $s:W\to S$. Consider the
    continuous map
   \[
        \Theta:W\times W\times X\to X\times X,
        \qquad
        \Theta(x_1,x_2,y)=\bigl(s(x_1),T_{-r(x_2)}y\bigr).
   \]
   Then $\Theta(x_0,x_0,y_0)=(x_0,y_0)\notin\Delta_X$. Choose open neighborhoods $U$ and $V$ of $x_0$ and $y_0$ respectively such that $\overline{U}\subset W\setminus\fix(X,T)$ and $\Theta(\overline{U}\times\overline{U}\times \overline{V})\subset X\times X\setminus\Delta_X$. Then $(\overline{U}\times \overline{V})\cap (\fix(X,T)\times \fix(X,T))=\emptyset$ and $(\overline{U}\times \overline{V})\cap\Delta_X=\emptyset$. Conclude $\overline{U}\times \overline{V}\subset\Omega$.

   For $x\in\overline{U}$, write $x=T_{r(x)}s(x)$ and define 
   \[S_0=s(\overline{U}), B=\{T_{-r(x)}y: x\in \overline{U},y\in \overline{V}\}.\]
   Then $S_0$ and $\fix(X,T)$ are disjoint. Furthermore, $S_0$ and $B$ are disjoint. Indeed, if $s(x_1)=T_{-r(x_2)}y$ for some $x_1,x_2\in\overline{U}$ and $y\in\overline{V}$, we have $\Theta(x_1,x_2,y)\in\Delta_X$, which leads to a contradiction. Thus $\eta:=\dist(S_0,B)=\inf_{x\in S_0,y\in B}d(x,y)>0$, where $d$ is a compatible metric on $X$.

   Choose $0<\delta<a$ such that for all $x\in X$, if $|t|\leq\delta$, then $d(T_tx,x)<\eta/3$. Set $J=[-\delta,\delta]$. Then the tubes
   \[C_1=T_J(S_0),\qquad C_2=T_J(B)\]
   are disjoint. Indeed, if $T_ts=T_ub$ for some $s\in S_0$, $b\in B$ and $t,u\in J$,
   $d(s,b)\leq d(s,T_ts)+d(T_ub,b)<2\eta/3$. This contradicts $d(s,b)\geq \eta$. Note the map $J\times S_0\to C_1,(t,s)\mapsto T_ts$ is injective as $J\subset(-a,a)$ and $S_0\subset S$, and therefore a homeomorphism onto $C_1$. Moreover, $C_1\cap A=\emptyset$ since $S_0\cap\fix(X,T)=\emptyset$. Otherwise $T_ts\in A\subset\fix(X,T)$ for some $t\in J$ and $s\in S_0$, which yields $s\in \fix(X,T)\cap S_0$.

   Denote 
   \begin{equation*}
       \OO(\overline{U},\overline{V})=\{f\in \CC_{A,h}: \text{ for any } (x,y)\in\overline{U}\times \overline{V}, \text{ there exists }t\in\R\text{ with }f(T_tx)\neq f(T_ty)\}.
   \end{equation*}
   Obviously, $\OO(\UU,\VV)$ is open in $\CC_{A,h}$. We now prove that $\OO(\UU,\VV)$ is dense in $\CC_{A,h}$. Fix $f\in \CC_{A,h}$ and $\varepsilon>0$. For $s\in S_0$ and $b\in B$, define $\alpha_s,\beta_b\in C(J)$ by
   \[\alpha_s(t)=f(T_ts),\qquad \beta_b(t)=f(T_tb).\]
   Then the map $S_0\times B\to C(J), (s,b)\mapsto\beta_b-\alpha_s$ is continuous. Therefore
   \[D_f:=\{\beta_b-\alpha_s:(s,b)\in S_0\times B\}\]
   is a compact subset of $C(J)$. Choose $v\in C(J)$ such that $\|v\|_{\infty}<\varepsilon$ and $v\notin D_f$. Indeed, if $0\notin D_f$, we may choose $v=0$. Otherwise, consider the open ball $\{w\in C(J):\|w\|_{\infty}<\varepsilon\}$. Since $C(J)$ is infinite-dimensional and $D_f$ is compact, by Riesz's lemma the open ball is not contained in $D_f$ and thereby we may find $v$ as desired. 

   Let $E:=C_1\cup C_2\cup A$. Define continuous $q_0:E\to\R$ by
   \[q_0(T_ts)=v(t), \text{ if } t\in J,s\in S_0,\qquad q_0\vert_{C_2\cup A}=0.\]
   $q_0$ is well-defined because $C_1=T_J(S_0)$ is disjoint from $C_2\cup A$.
   By the Tietze extension theorem, we obtain $q\in C(X)$ such that $q\vert_E=q_0$ and $\|q\|_{\infty}\leq\|v\|_{\infty}\leq\varepsilon$. Then $f_1:=f+q\in \CC_{A,h}$ because $q\vert_A=q_0\vert_A=0$.

   For $x\in\UU$ and $y\in\VV$, write $x=T_{r(x)}s(x)$ and set $b=T_{-r(x)}y\in B$. For every $t\in J$, $T_{t-r(x)}x=T_ts(x)$ and $T_{t-r(x)}y=T_tb$. Since $T_ts(x)\in C_1$ and $T_tb\in C_2$, we have
   \[f_1(T_{t-r(x)}x)-f_1(T_{t-r(x)}y)=f_1(T_ts(x))-f_1(T_tb)=\alpha_{s(x)}(t)+v(t)-\beta_b(t).\]
   If $f_1\notin\OO(\UU,\VV)$, we obtain that $v=\beta_b-\alpha_{s(x)}\in D_f$, which leads to a contradiction. Conclude $f_1\in\OO(\UU,\VV)$. Thus $\OO(\UU,\VV)$ is dense in $\CC_{A,h}$ since $f\in \CC_{A,h}$ and $\varepsilon>0$ are arbitrary.

   Up to now, we have proven that for every pair $(x_0,y_0)\in\Omega$, there exist open neighborhoods $U$ and $V$ of $x_0$ and $y_0$ such that $\UU\times\VV\subset\Omega$ and $\OO(\UU,\VV)$ is open and dense in $\CC_{A,h}$. Note $X\times X$ is a hereditarily Lindel\"{o}f space.
   Then there exist countable families of open subsets $\{U_n\}_{n\in\N}$ and $\{V_n\}_{n\in\N}$ of $X$ satisfying $\Omega=\bigcup_{n=1}^{\infty} U_n\times V_n$. Moreover, for each $n\in\N$, $\UU_n\times\VV_n\subset\Omega$ and $\OO(\UU_n,\VV_n)$ is dense in $\CC_{A,h}$.
   By the Baire category theorem, $\bigcap_{n=1}^{\infty}\OO(\UU_n,\VV_n)$ is a dense $G_{\delta}$ subset of $\CC_{A,h}$.
   
   We now prove that $\RR=\bigcap_{n=1}^{\infty}\OO(\UU_n,\VV_n)$.
   Fix $(x,y)\in\Omega$ and $f\in \bigcap_{n=1}^{\infty}\OO(\UU_n,\VV_n)$. Then $(x,y)\in\UU_n\times\VV_n$ for some $n\in\N$. Hence there exists $t\in\R$ with $f(T_tx)\neq f(T_ty)$ since $f\in\OO(\UU_n,\VV_n)$. It follows $f\in \RR$ and thereby we obtain that $\bigcap_{n=1}^{\infty}\OO(\UU_n,\VV_n)\subset\RR$.
   On the other hand, if $f\in\RR$, for any $n\in\N$ and $(x,y)\in\UU_n\times\VV_n\subset\Omega$ there exists $t\in\R$ such that $f(T_tx)\neq f(T_ty)$. This yields that $f\in \OO(\UU_n,\VV_n)$ for each $n\in\N$. Conclude $\RR=\bigcap_{n=1}^{\infty}\OO(\UU_n,\VV_n)$, i.e. $\RR$ is a dense $G_{\delta}$ subset of $\CC_{A,h}$.
\end{proof}

\subsection{Proof of the theorem}
\begin{proof}[Proof of Theorem \ref{theorem: embedding into C(G,R)}]
    Assume $\dim G=n$. Let $\mathfrak g=T_eG$ be the Lie algebra of $G$. Choose a basis $v_1,\ldots,v_n$ of $\mathfrak g$. We define one-dimensional topological flows $\phi^i:\R\times X\to X, 1\leq i\leq n$ by
    \[\phi_t^i(x)=\exp(tv_i)x.\]
    
        We first claim that $\fix(X,G)=\bigcap_{i=1}^n\fix(X,\phi^i)$.
        If $x\in \fix(X,G)$, $gx=x$ for all $g\in G$. In particular, for all $1\leq i\leq n$, $\exp(tv_i)\in G$ and therefore $\phi_t^i(x)=x$ for all $t\in\R$. This yields that $x\in \fix(X,\phi^i)$ for all $1\leq i\leq n$.
        It suffices to prove that $\bigcap_{i=1}^n\fix(X,\phi^i)\subset \fix(X,G)$. Fix $x\in \bigcap_{i=1}^n\fix(X,\phi^i)$. Denote by $G_x=\{g\in G: gx=x\}$ the stabilizer of $x$. Then $G_x$ is a closed subgroup of $G$ as $G$ acts continuously on $X$. By Theorem \ref{theorem: closed subgroup theorem}, $G_x$ is an embedded Lie subgroup. Since $x\in \fix(X,\phi^i)$, $\exp(tv_i)\in G_x$ for all $t\in\R$ and thus $v_i\in \Lie(G_x):=T_eG_x$. As  $v_1,\ldots,v_n$ are a basis of $\mathfrak g$, it follows $\Lie(G_x)=\mathfrak g$ and $\dim G_x=\dim G=n$. We now consider the inclusion map $\iota: G_x\hookrightarrow G$. Since $G_x$ is an embedded submanifold of $G$,  the differential $d\iota_e: \Lie(G_x)\to \mathfrak g$ is an isomorphism. By the inverse function theorem \cite[Theorem 7.6]{Lee03}, $\iota$ is a local diffeomorphism near $e$. Hence there exists an open neighborhood $U$ of $e$ such that $U\subset G_x$. Conclude $G_x$ is also an open subgroup. Indeed, for any $h\in G_x$, $hU\subset G_x$ is an open neighborhood of $h$. Since $G$ is connected, we obtain that $G_x=G$, which implies that $gx=x$ for all $g\in G$. Conclude $x\in \fix(X,G)$, thereby completing the proof of the claim.

        Note $\fix(X,\phi^i)$ is closed for each $1\leq i\leq n$. 
        Let $h:\fix(X,G)\to\R$ be a topological embedding. 
        We apply Lemma \ref{lemma:one-dimensional flow} to $\phi^i:\R\times X\to X$ and $A:=\fix(X,G)\subset \fix(X,\phi^i)$, and obtain a $G_\delta$ set $\RR_i\subset \CC_{A,h}$ for each $i$ \footnote{Now the reader may see why, in Lemma \ref{lemma:one-dimensional flow}, we only require the values of $h$ to be prescribed on a subset of the fixed point set. This set is exactly the intersection of the fixed point sets of one-dimensional flows constructed from the original $G$-action.}. By the Baire category theorem, the intersection $\bigcap_{i=1}^n\RR_i$ is dense and $G_\delta$ in $\CC_{A,h}$. In particular, it is nonempty. Fix $f\in \bigcap_{i=1}^n\RR_i$. We show that  $J_f:X\to C(G)$ is a $G$-equivariant embedding. Let $J_f(x)=J_f(y)$ with $x\neq y$. Fix $1\leq i\leq n$. Then for all $t\in\R$ it holds
        \begin{equation*}
            f(\phi_t^ix)=f(\exp(tv_i)x)=J_f(x)(\exp(tv_i))=J_f(y)(\exp(tv_i))=f(\exp(tv_i)y)=f(\phi_t^iy).
        \end{equation*}
        Conclude $x,y\in \fix(X,\phi^i)$ since $f\in \RR_i$. Then $x,y\in \bigcap_{i=1}^n\fix(X,\phi^i)=\fix(X,G)$. Since $f\vert _{\fix(X,G)}=h$, we have
        \begin{equation*}
            h(x)=f(x)=J_f(x)(e)=J_f(y)(e)=f(y)=h(y),
        \end{equation*}
        which leads to a contradiction as $h$ is injective. Up to now, we have proven $J_f: X\to C(G)$ is injective. Obviously, $J_f$ is $G$-equivariant and continuous with respect to the compact-open topology. Thus $J_f$ is the desired equivariant embedding.
\end{proof}

\section{A technical lemma}\label{Subsection:a technical lemma}

\begin{lemma}\label{lemma: improvement of regularity}
Let $G$ be a connected Lie group and let $\rho$ be a right-invariant Riemannian
distance.  Then there exists a linear operator
\[
 Q:C_b(G)\longrightarrow C_b^1(G)
\]
satisfying the following properties.
\begin{enumerate}
\item $Q$ is positive (i.e. $u\geq 0$ implies $Qu\geq 0$) and $Q(\11)=1$, where $\11\in C_b(G)$ is the constant function.
\item For every $a\in G$, it holds $R_a \circ Q=Q \circ R_a$.
\item There exists a Borel probability measure $\nu$ on $G$ such that for all $u\in C_b(G)$ and $g\in G$
\begin{equation*}
 Q(u)(g)=\int_Gu(hg)\,d\nu(h).
\end{equation*}
\item $Q$ is injective on $C_b(G)$.
\item There exists $L>0$ such that $\lip_\rho(Qu)\leq L\cdot\osc(u)$ for all $u\in C_b(G)$.
\end{enumerate}
\end{lemma}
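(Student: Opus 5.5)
We take $Q=e^{-\Delta}$, the heat semigroup at time $t=1$ for a right-invariant metric $\gg$. In kernel form,
\[
Qu(g)=\int_G p_1(g,h)u(h)\,d\mu(h).
\]
Since the $p_1(g,h)$ given by \cite[Theorem 3.3]{Gri06} a priori only acts on $L^2(G,\mu)$, we first extend this formula to bounded continuous $u$.

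\textbf{Step 1: positivity, stochastic completeness, equivariance.} The heat kernel is positive. Stochastic completeness, $\int_G p_1(g,h)\,d\mu(h)=1$, should hold because $(G,\gg)$ has bounded geometry: it is homogeneous, so its Ricci curvature is bounded below and Grigor'yan's volume criterion applies. This gives (1). The right translations $\rr_a$ are isometries, so they commute with $\Delta$ on $C_c^\infty(G)$, hence with its closure and with $e^{-\Delta}$. By uniqueness of the kernel we get $p_1(ga,ha)=p_1(g,h)$, which gives (2).

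\textbf{Step 2: the measure $\nu$, property (3).} Substitute $h=kg$, i.e.\ $k=hg^{-1}$. Right-invariance of $\mu$ gives
\[
Qu(g)=\int_G p_1(g,kg)u(kg)\,d\mu(k)=\int_G p_1(e,k)u(kg)\,d\mu(k),
\]
using $p_1(g,kg)=p_1(e,k)$ from Step 1. So we set $d\nu(k)=p_1(e,k)\,d\mu(k)$, which is a probability measure by Step 1. This form shows that $Qu$ is bounded and continuous for $u\in C_b(G)$; continuity comes from dominated convergence, since $k\mapsto u(kg)$ is continuous in $g$.

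\textbf{Step 3: the gradient bound, property (5).} We differentiate under the integral in the original kernel form. For a unit vector $X\in T_gG$,
\[
X\,Qu(g)=\int_G X_g p_1(\cdot,h)\,\bigl(u(h)-c\bigr)\,d\mu(h),
\]
where we may subtract any constant $c$ because $\int_G X_g p_1(\cdot,h)\,d\mu(h)=X(1)=0$. Choosing $c=\tfrac12(\sup u+\inf u)$ gives
\[
|\nabla Qu(g)|\le \tfrac12\,\osc(u)\int_G|\nabla_g p_1(g,h)|\,d\mu(h)=:\tfrac12\osc(u)\,L_0 .
\]
By right-invariance, $L_0$ does not depend on $g$, so we only need it finite at $g=e$. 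For this we use Gaussian gradient estimates for heat kernels on manifolds with Ricci curvature bounded below (Li--Yau type): $|\nabla p_1(e,h)|\le C e^{-\rho(e,h)^2/C}$. Combined with at most exponential volume growth of balls, this makes the integral finite. Justifying differentiation under the integral uses the same bound locally uniformly in $g$. Then $\lip_\rho(Qu)\le \sup_g|\nabla Qu(g)|$ along geodesics, so (5) holds with $L=L_0/2$. The same estimates give $Qu\in C^1_b(G)$. This step is the main analytic obstacle: we must either cite the heat kernel gradient bounds (e.g.\ from Grigor'yan, or Varopoulos--Saloff-Coste--Coulhon for Lie groups) or derive them.

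\textbf{Step 4: injectivity, property (4).} Suppose $u\in C_b(G)$ and $e^{-\Delta}u=0$. Multiplicativity of the semigroup is not enough on its own, since $u$ need not lie in $L^2$. We use duality instead. For $\varphi\in C_c^\infty(G)$, symmetry of $p_1$ gives
\[
0=\int_G \bigl(e^{-\Delta}u\bigr)\varphi\,d\mu=\int_G u\,\bigl(e^{-\Delta}\varphi\bigr)\,d\mu .
\]
The task is then to show that the functions $e^{-\Delta}\varphi$, for $\varphi\in C_c^\infty(G)$, are dense enough to separate bounded continuous functions. One way is analyticity: $t\mapsto \int_G u\, e^{-t\Delta}\varphi\,d\mu$ is real-analytic for $t>0$. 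This should follow from holomorphic extension of the heat semigroup together with Gaussian bounds, which control integrability against bounded $u$. If $e^{-\Delta}u=0$, then $e^{-t\Delta}u=e^{-(t-1)\Delta}e^{-\Delta}u=0$ for all $t\ge1$; this uses the semigroup property on $L^\infty$, which follows from the Chapman--Kolmogorov identity for the kernel. Analyticity then gives $\int_G u\,e^{-t\Delta}\varphi\,d\mu=0$ for all $t>0$. Letting $t\to0$, we have $e^{-t\Delta}\varphi\to\varphi$ in $L^1$, by Scheff\'e's Lemma \ref{lemma:Scheffe} applied to positive and negative parts, or via the kernel. Hence $\int_G u\varphi\,d\mu=0$ for every $\varphi$, so $u=0$.

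An alternative for Step 4, if analyticity is delicate, is to replace $e^{-\Delta}$ by the resolvent $Q=(1+\Delta)^{-k}$, for which injectivity is immediate since $(1+\Delta)^k Qu=u$ distributionally. Elliptic regularity and singular-support arguments then give smoothness, which matches the preliminaries on distributions and Sobolev spaces.
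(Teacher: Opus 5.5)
Your proposal is correct in outline, but the main route differs from the paper's. The paper never uses the fixed-time operator $e^{-\Delta}$. It takes the resolvent $Q=\lambda(\lambda+\Delta)^{-1}$, realized as $Qu=\lambda\int_0^\infty e^{-\lambda t}P_tu\,dt$ on $C_b(G)$, which is exactly the fallback you mention at the end. Your Steps 1 and 2 carry over essentially unchanged. The differences are in the paper's route to stochastic completeness and to equivariance. For the volume growth the paper uses a covering argument, $\mu(B_R)\le V(1)e^{CR}$, instead of Bishop--Gromov. For equivariance it derives $p_t(g,ha^{-1})=p_t(ga,h)$ from Stone's formula.

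The real trade-off is between properties (4) and (5).
\begin{itemize}
\item \textbf{Injectivity.} With the resolvent, injectivity is a one-line consequence of the distributional identity $(\lambda+\Delta)Qu=\lambda u$. The paper obtains this identity by pairing with $(\lambda+\Delta)\psi$ for $\psi\in C_c^\infty(G)$ and integrating $\frac{d}{dt}\bigl(e^{-\lambda t}P_t\psi\bigr)$ in $L^1$.
\item \textbf{Gradient bound.} The price of the resolvent is that the bound must come from local elliptic theory. The paper uses $W^{2,p}$ interior estimates in a single chart. These are uniform in $g$ because right translations preserve the coordinate expressions of $\Delta$ and $\gg$. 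Sobolev embedding then finishes the argument.
\end{itemize}
With $e^{-\Delta}$ the situation is reversed. Property (5) follows directly from known global heat-kernel gradient bounds under a Ricci lower bound, either of Li--Yau/Hamilton type or the simpler Bakry--\'Emery reverse Poincar\'e inequality $|\nabla P_tf|\le C_t\|f\|_\infty$. But injectivity becomes a backward-uniqueness statement for bounded solutions of the heat equation.

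That backward-uniqueness step is the one place where your sketch is thin. The claim that $z\mapsto\int_G u\,e^{-z\Delta}\varphi\,d\mu$ is holomorphic on $\{\Re z>0\}$ for bounded $u$ is true. However, it requires $e^{-z\Delta}\varphi\in L^1$ locally uniformly in $z$, and this is not automatic on a group of exponential volume growth. One way to supply it is the complex-time Davies--Gaffney bound
\[
|\langle e^{-z\Delta}f_1,f_2\rangle|\le e^{-\Re(r^2/4z)}\|f_1\|_2\|f_2\|_2
\]
for $f_1,f_2$ supported at distance $r$. Sum this over annuli $\{k\le\rho(e,\cdot)<k+1\}$ against $\mu(B_R)\le Ce^{CR}$. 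The series converges because $\Re(1/z)>0$. Consequently $\langle u,e^{-z\Delta}\varphi\rangle$ is the locally uniform limit of the holomorphic functions $\langle u\mathbf 1_{B_R},e^{-z\Delta}\varphi\rangle$, hence holomorphic. Your identity-theorem argument and the limit $t\to0$ then go through as you wrote them.

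So your route obtains the Lipschitz bound cheaply from heat-kernel estimates, at the cost of a delicate analyticity argument for injectivity. The paper's choice makes injectivity trivial and moves the work into local elliptic regularity.
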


\begin{proof}
    Let $\mu$ be the Riemannian volume  induced by a right-invariant metric tensor. 
    Let $\Delta\vert_{C_c^{\infty}(G)}$ be the Laplace--Beltrami  operator defined on $C_c^{\infty}(G)$. Since $G$ is geodesically complete, by \cite[Theorem 2.2]{Gri06}, $\Delta\vert_{C_c^{\infty}(G)}$ is essentially self-adjoint. Denote by $\Delta$ the self-adjoint extension of $\Delta\vert_{C_c^{\infty}(G)}$, whose domain $\DD(\Delta)$ is dense in $L^2(G,\mu)$. Let $\{P_t\}_{t\geq 0}$ be the heat semigroup of $(G,\mu)$ defined on $L^2(G,\mu)$, i.e. $P_t=e^{-t\Delta}$. By \cite[Theorem 3.3]{Gri06}, $(G,\mu)$ has a unique heat kernel $p_t(g,h)$, which is $C^{\infty}$ on $\R_+\times G\times G$. Moreover, the heat kernel satisfies the following properties:
    \begin{enumerate}[label=(\alph*)]
        \item $P_tu(g)=\int_G p_t(g,h)u(h)d\mu(h)$ for all $u\in L^2(G,\mu), t>0$ and $g\in G$;
        \item $p_t(g,h)=p_t(h,g)$ for all $t>0$ and $g,h\in G$;
        \item $p_t(g,h)>0$ and $\int_G p_t(g,h)d\mu(h)\leq 1$ for all $t>0$.
    \end{enumerate}

   For $u\in C_b(G)$ and $t>0$, we define
    \begin{equation*}
        P_tu(g)=\int_G p_t(g,h)u(h)d\mu(h),\quad g\in G, 
    \end{equation*}
    which coincides with the $L^2$-heat semigroup on $C_b(G)\cap L^2(G,\mu)$. This is well-defined in view of property (c). Moreover, for every $u\in C_b(G)$ and $t>0$ we have $P_tu\in C_b(G)$. We will prove in
part~(4) below that $P_tu$ is continuous, and hence that
$P_tu\in C_b(G)$.
    
    We now prove that $P_t\mathbf{1}=1$, i.e. for all $t>0$ and $g\in G$:
    \begin{equation}\label{eq:stochastically complete}
        \int_G p_t(g,h)d\mu(h)=1.
    \end{equation}
    Set $B_R=B_{\rho}(e,R)$ and $V(R)=\mu(B_R)$ for $R>0$. Let $G=\bigcup_{a\in G}B_1a$ be an open cover. Note $\overline{B}_2$ is compact by Theorem \ref{theorem:Hopf-Rinow}. Then there exists $N\in\N$ and $a_1,\ldots,a_N\in G$ such that $\overline{B}_2\subset\bigcup_{i=1}^N B_1a_i$.
    \begin{claim}\label{induction}
        For $m\in\N$, $B_m$ is covered by at most $N^{m-1}$ right argument shifts of $B_1$.
    \end{claim}
    \begin{proof}
        If $m=1$, obviously the claim holds.
        Assume that the claim holds for $1\leq m\in\N$. Fix $x\in B_{m+1}$.
        \begin{enumerate}[label=(\roman*)]
            \item If $x\in B_m$, choose $y=x$. 
            \item Otherwise, set $d_x:=\rho(e,x)\in[m,m+1)$ and choose $s\in(d_x-m,1)$. Let $\gamma:[0,d_x]\to G$ be the unit-speed minimizing geodesic from $e$ to $x$. Set $y=\gamma(d_x-s)$. Then $\rho(e,y)=d_x-s<m$ and $\rho(y,x)=s<1$.
        \end{enumerate}
        Combining (i) and (ii), we conclude that $y\in B_m$ and $\rho(y,x)<1$. Since $\rho$ is right-invariant, $\rho(e,xy^{-1})=\rho(y,x)<1$ and $xy^{-1}\in B_1$. By the induction hypothesis, $y=hw$ for some $h\in B_1$ and the choice of $w$ is at most $N^{m-1}$. Moreover,
        \begin{equation*}
            \rho(e,xy^{-1}h)\leq \rho(e,h)+\rho(h,xy^{-1}h)=\rho(e,h)+\rho(e,xy^{-1})<2.
        \end{equation*}
        This yields that $xy^{-1}h\in B_2$ and therefore $xy^{-1}h=ka_i$ for some $k\in B_1$ and $1\leq i\leq N$. Then $x=xy^{-1}y=xy^{-1}hw=ka_iw$. Note the choice of $a_iw$ is at most $N^{m-1}\cdot N=N^m$. Conclude $B_{m+1}$ is covered by at most $N^m$ right argument shifts of $B_1$ since $x\in B_{m+1}$ is arbitrary.
    \end{proof}
    Recall that the Riemannian volume $\mu$ is right-invariant. By Claim \ref{induction}, for $m\in\N$ it holds
    \begin{equation*}
        V(m)=\mu(B_m)\leq N^{m-1}\mu(B_1)=N^{m-1}V(1).
    \end{equation*}
    Fix $R\geq 1$ and set $m=\lceil R\rceil$. Direct calculation shows that 
    \begin{equation}\label{eq:volume estimate}
        V(R)\leq V(1)e^{CR}.
    \end{equation} 
    where $C=\max\{\log N,1\}$.

        If $G$ is compact, $\mathbf{1}\in C_c^{\infty}(G)$ and $P_t\mathbf{1}=1$ in $L^2(G,\mu)$. 
        Recall that $\R_+\times G\times G\ni (t,g,h)\mapsto p_t(g,h)$ is smooth. Then $P_t\mathbf{1}\equiv 1$.
        If $G$ is non-compact, $\mu(G)=\infty$. Indeed, $\mu$ is a Haar measure and a non-compact locally compact group has infinite Haar measure. Since $B_R$ increases to $G$ as $R\to\infty$, $V(R)=\mu(B_R)\to\infty$ as $R\to\infty$. Choose $R_0\geq 1$ large enough such that $\log V(R)>0$ if $R\geq R_0$. By \eqref{eq:volume estimate} we obtain that $0<\log V(R)\leq \log V(1)+CR$ if $R\geq R_0$. Thus
        \begin{equation*}
            \int_{R_0}^\infty\frac{R\,dR}{\log V(R)}\geq\int_{R_0}^\infty\frac{R\,dR}{\log V(1)+CR}=\infty.
        \end{equation*}
        By \cite[Theorem 3.13]{Gri06}, $(G,\mu)$ is stochastically complete, i.e. for all $t>0$ and $g\in G$:
        \begin{equation*}
          \int_G p_t(h,g)d\mu(h)=1.
        \end{equation*}
        Since $p_t(g,h)=p_t(h,g)$ for all $t>0$ and $g,h\in G$, we have $P_t\mathbf{1}=1$ for all $t>0$.
    Up to now, we have proven \eqref{eq:stochastically complete} whenever $G$ is compact or non-compact.

    Fix $\lambda>0$. We define $Q$ on $C_b(G)$ by
    \begin{equation*}
        Qu(g)=\lambda\int_0^{\infty}e^{-\lambda t}P_tu(g)dt.
    \end{equation*}
    It is clear that $Q$ is linear. We now verify that $Q$ satisfies the desired five properties.
    \begin{enumerate}[label=(\arabic*)]
        \item Note that $Qu\geq 0$ if $u\geq 0$ and 
        \begin{equation*}
            Q\mathbf{1}=\lambda\int_0^{\infty}e^{-\lambda t}P_t\mathbf{1}dt=\lambda\int_0^{\infty}e^{-\lambda t}dt=1.
        \end{equation*}
        \item  We proceed to the proof of the second property of $Q$ in the statement. Fix $a\in G$. Recall that $R_au(g)=u(ga)$ for $g\in G$ and $u\in C_b(G)$. Note the Laplace--Beltrami  operator $\Delta$ is invariant under any isometry\footnote{Interested readers can find a proof of this fact in the note of Gonzalez \cite[pp.105--106]{Gon10}.} on $G$. Then 
        \begin{equation}\label{eq:Laplace commutes with right shift}
            \Delta(R_au)=R_a(\Delta u)
        \end{equation}
        for any $u\in C_c^{\infty}(G)$. Since $\mu$ is right-invariant, $R_a$ is a bounded operator on $L^2(G,\mu)$ because 
        \begin{equation*}
           \langle R_au,R_av\rangle_{L^2(G,\mu)} :=\int_G u(ga)v(ga)d\mu(g)=\int_G u(g)v(g)d\mu(g)=\langle u,v\rangle_{L^2(G,\mu)}.
        \end{equation*} 
        Then \eqref{eq:Laplace commutes with right shift} extends to all $u\in\DD(\Delta)$ as $C_c^{\infty}(G)$ is a core for the essentially self-adjoint operator $\Delta$. 
        Consequently, $R_a$ commutes with the resolvent\footnote{Note $(\Delta-i\Id)^{-1}$ is well-defined since the spectrum of a self-adjoint operator is a subset of $\R$ \cite[Corollary 3.14]{Sch12}.} $(\Delta-i\Id)^{-1}$ on $L^2(G,\mu)$
        because $R_a(\Delta-i\Id)=(\Delta-i\Id)R_a$ on $\DD(\Delta)$ and $R_a$ is bounded.
        By Stone's formulas \cite[Proposition 5.14]{Sch12}, we conclude that $R_a$ commutes with all spectral projections of $\Delta$. In particular, for every bounded Borel function $f:\R\to\mathbb{C}$, by the spectral calculus it holds $R_af(\Delta)=f(\Delta)R_a$. Taking $f(\lambda)=e^{-t\lambda}$ for fixed $t>0$, we further have $R_a\circ P_t=P_t\circ R_a$ on $L^2(G,\mu)$. For $t>0,u\in C_c^{\infty}(G)$ and $g\in G$, it follows
        \begin{equation*}
        \begin{split}
            &\int_Gp_t(g,ha^{-1})u(h)d\mu(h)=\int_Gp_t(g,h)u(ha)d\mu(h)\\
            &=P_t(R_au)(g)=R_a(P_tu)(g)=P_t(u)(ga)\\
            &=\int_Gp_t(ga,h)u(h)d\mu(h)
        \end{split}
        \end{equation*}
        as $\mu$ is right-invariant. 
        Since the heat kernel $p_t(g,h)$ is unique and smooth, for $t>0$ and $g,h\in G$        
        \begin{equation}\label{eq:heat kernel uniqueness}
            p_t(g,ha^{-1})=p_t(ga,h).
        \end{equation}
        Combining \eqref{eq:heat kernel uniqueness} and the right invariance of $\mu$, for any $u\in C_b(G)$ and $g\in G$ it holds
        \begin{equation*}
            \begin{split}
                P_t(R_au)(g)&=\int_Gp_t(g,h)u(ha)d\mu(h)=\int_Gp_t(g,ka^{-1})u(k)d\mu(k)\\
                &=\int_Gp_t(ga,k)u(k)d\mu(k)=P_t(u)(ga)=R_a(P_tu)(g).
            \end{split}
        \end{equation*}
        Using the definition of $Q$, we conclude that $Q\circ R_a(u)=R_a\circ Q(u)$ for every $u\in C_b(G)$.
        \item We now define a Borel probability measure on $G$ by
        \begin{equation*}
            \nu(E)=\lambda\int_0^\infty e^{-\lambda t}\left(\int_Ep_t(e,h)\,d\mu(h)\right)dt,
        \end{equation*}
        where $E\subset G$ is Borel. $\nu$ is a probability measure as $\nu(G)=\lambda\int_0^{\infty}e^{-\lambda t}dt=1$ using \eqref{eq:stochastically complete}. Moreover, we have $Q(u)(e)=\int_Gu(h)d\nu(h)$ for all $u\in C_b(G)$. Indeed, by Tonelli's theorem, for any Borel $E\subset G$
        \begin{equation*}
            \begin{split}
                \nu(E)&=\lambda\int_0^\infty e^{-\lambda t}\left(\int_Gp_t(e,h)\mathbf{1}_E(h)\,d\mu(h)\right)dt\\
                &=\int_G\mathbf{1}_E(h)\left(\lambda \int_0^\infty e^{-\lambda t}p_t(e,h)dt\right)d\mu(h)
            \end{split}
        \end{equation*}
        and therefore
        \begin{equation*}
            \begin{split}
                \int_Gu(h)d\nu(h)&=\int_Gu(h)\left(\lambda \int_0^\infty e^{-\lambda t}p_t(e,h)dt\right)d\mu(h)\\
                &=\lambda\int_0^\infty e^{-\lambda t}\left(\int_Gp_t(e,h)u(h)\,d\mu(h)\right)dt\\
                &=\lambda\int_0^\infty e^{-\lambda t}P_tu(e)dt=Q(u)(e).
            \end{split}
        \end{equation*}
        Using the second property of $Q$, for any $g\in G$
        \begin{equation*}
            \begin{split}
                (Qu)(g)&=R_g\circ Q(u)(e)=Q\circ R_g(u)(e)\\
                &=\int_G(R_gu)(h)d\nu(h)=\int_G u(hg)d\nu(h).
            \end{split}
        \end{equation*}
        This verifies the third property of $Q$.
        \item We now prove that $Qu\in C_b(G)$ for $u\in C_b(G)$. Choose two sequences $(t_j)_{j\in\N}$ and $(g_j)_{j\in\N}$ such that $t_j\to t$ and $\rho(g_j,g)\to 0$ for some $t>0$ and $g\in G$. Fix $h\in G$. Since $p_t(g,h)$ is $C^{\infty}$, $p_{t_j}(g_j,h)\to p_t(g,h)$. Note $p_{t_j}(g_j,h)\geq 0, p_t(g,h)\geq 0$ and by \eqref{eq:stochastically complete} we have
        \begin{equation*}
            \int_G p_{t_j}(g_j,h)d\mu(h)=\int_G p_t(g,h)d\mu(h)=1.
        \end{equation*}
         By Lemma \ref{lemma:Scheffe}, $p_{t_j}(g_j,\cdot)\to p_t(g,\cdot)$ in $L^1(G,\mu)$. Moreover,
        \begin{equation*}
            |P_{t_j}u(g_j)-P_tu(g)|\leq \|u\|_{\infty}\cdot \|p_{t_j}(g_j,\cdot)-p_t(g,\cdot)\|_{L^1(G,\mu)}\to0
        \end{equation*}
        as $j\to\infty$. Thus $(t,g)\mapsto P_tu(g)$ is continuous on $\R_+\times G$. Fix $0<\varepsilon<M$. Set
        \begin{equation*}
            Q_{\varepsilon,M}u(g)=\lambda\int_{\varepsilon}^M e^{-\lambda t}P_tu(g)\,dt
        \end{equation*}
        for $u\in C_b(G)$ and $g\in G$.
        By the dominated convergence theorem, $Q_{\varepsilon,M}u$ is continuous. Since $|P_tu(g)|\leq \|u\|_{\infty}$, 
        \begin{equation}\label{eq:Qu_leq_u}
            \|Qu\|_{\infty}\leq \lambda\int_0^{\infty}e^{-\lambda t}dt \cdot\|u\|_{\infty}=\|u\|_{\infty}.
        \end{equation}
     Furthermore,
        \begin{equation*}
            \begin{split}
                \|Qu-Q_{\varepsilon,M}u\|_{\infty}&\leq\left(\lambda\int_0^{\varepsilon}e^{-\lambda t}dt+\lambda\int_{M}^{\infty}e^{-\lambda t}dt\right)\cdot\|u\|_{\infty}\\
                &=\bigl(1-e^{-\lambda\varepsilon}+e^{-\lambda M}\bigr)\|u\|_{\infty}.
            \end{split}
        \end{equation*}
        Letting $\varepsilon\to0$ and $M\to\infty$, we obtain that $Qu\in C_b(G)$ since $Q_{\varepsilon,M}u$ uniformly converges to $Qu$ on $G$.

        We continue to prove that $Q:C_b(G)\to C_b(G)$ is injective.
        \begin{claim}\label{claim:strongly continuity}
            The heat semigroup $\{P_t\}_{t\geq0}$ is strongly continuous in $L^1(G,\mu)$.
        \end{claim}
        \begin{proof}
            Note $(G,\rho)$ is metrically complete by Theorem \ref{theorem:Hopf-Rinow}. This implies that every compactly supported continuous function is uniformly continuous. Let $\varphi\in C_c(G)$ and denote $K=\supp \varphi$. Fix $\varepsilon>0$ and choose $\delta>0$ such that $|\varphi(g)-\varphi(h)|<\varepsilon$ if $\rho(g,h)<\delta$. By \eqref{eq:stochastically complete}
            \begin{equation*}
                   \int_G \bigl(P_t\varphi(g)-\varphi(g)\bigr)d\mu(g)
                   =\int_G\int_Gp_t(g,h)(\varphi(h)-\varphi(g))d\mu(h)d\mu(g).
            \end{equation*}
            Denote
            \begin{equation*}
                \begin{split}
                    &I_t^1:=\int_G\int_{B_\rho(g,\delta)}p_t(g,h)|\varphi(h)-\varphi(g)|d\mu(h)d\mu(g),\\
                    &I_t^2:=\int_G\int_{G\setminus B_\rho(g,\delta)}p_t(g,h)|\varphi(h)-\varphi(g)|d\mu(h)d\mu(g)
                \end{split}
            \end{equation*}
            respectively.
            On the one hand, 
            \begin{equation*}
            \begin{split}
                I_t^1&\leq  \int_G\int_{B_\rho(g,\delta)}p_t(g,h)|\varphi(g)-\varphi(h)|(\mathbf{1}_K(g)+\mathbf{1}_K(h))d\mu(h)d\mu(g) \\ 
                &\leq \varepsilon \int_G\int_G p_t(g,h)(\mathbf{1}_K(g)+\mathbf{1}_K(h))d\mu(h)d\mu(g)=2\varepsilon\mu(K).
            \end{split}
            \end{equation*}
            On the other hand, recall $p_t(g,h)$ is a regular fundamental solution\footnote{For the definition of \textit{regular fundamental solution} of the heat equation, see \cite[pp.102--103]{Gri06}.}. This implies that for any open set $U$ containing $g$, $\int_U p_t(h,g)d\mu(h)\to1$ as $t\to0$. In particular, for fixed $g\in G$
            \begin{equation*}
                \int_{B_\rho(g,\delta)}p_t(g,h)d\mu(h)=\int_{B_\rho(g,\delta)}p_t(h,g)d\mu(h)\to1
            \end{equation*}
            as $t\to0$. Thus $I_t^2\to 0$ as $t\to0$ by the dominated convergence theorem. Letting $\varepsilon\to0$, we conclude that for every $\varphi\in C_c(G)$ it follows $\|P_t\varphi-\varphi\|_{L^1(G,\mu)}\to0$ as $t\to0$. Since $C_c(G)$ is dense in $L^1(G,\mu)$, we further have $P_t\psi\to\psi$ in $L^1(G,\mu)$ for any $\psi\in L^1(G,\mu)$.
        \end{proof}
        \begin{claim}\label{claim:absolute continuity}
            Fix $\psi\in C_c^{\infty}(G)$. The map $t\mapsto P_t \psi$ is absolutely continuous in $L^1(G,\mu)$.
        \end{claim}
        \begin{proof}
            Note $\Delta\psi\in C_c^{\infty}(G)\subset L^1(G,\mu)\cap L^2(G,\mu)$.
         Moreover, 
               \begin{equation}\label{eq:L2 Bochner integral}
                  P_t\psi-\psi=-\int_0^t P_s\Delta\psi ds
            \end{equation}
            holds in $L^2(G,\mu)$, where the RHS is an $L^2$-Bochner integral.
            We first show that \eqref{eq:L2 Bochner integral} holds in $L^1(G,\mu)$ as well. We divide the interval $[0,t]$ into $n$ intervals of length $\frac{t}{n}$:
            \[0=s_0<s_1<\cdots<s_n=t,\qquad s_k-s_{k-1}=\frac{t}{n} \text{ for }\forall1\leq k\leq n.\]
            Let $R_n:=\sum_{k=1}^n(s_k-s_{k-1})P_{s_k}\Delta\psi$ be the Riemann sum. Since $s\mapsto P_s\Delta\psi$ is strongly continuous in $L^2(G,\mu)$  and $\|P_s\Delta\psi\|_{L^2(G,\mu)}\leq \|\Delta\psi\|_{L^2(G,\mu)}$, $R_n$ converges to $\int_0^tP_s\Delta\psi ds$ in $L^2(G,\mu)$. Combining \eqref{eq:L2 Bochner integral}, we have $R_n$ converges to $-(P_t\psi-\psi)$ in $L^2(G,\mu)$. On the other hand, by Claim \ref{claim:strongly continuity}, $s\mapsto P_s\Delta\psi$ is strongly continuous in $L^1(G,\mu)$ and $\|P_s\Delta\psi\|_{L^1(G,\mu)}\leq \|\Delta\psi\|_{L^1(G,\mu)}$. Then $s\mapsto P_s\Delta\psi$ is Bochner integrable and $R_n$ converges to $\int_0^tP_s\Delta\psi ds$ in $L^1(G,\mu)$. After passing to a subsequence, $R_n$ converges to the same limit on  $L^1(G,\mu)\cap L^2(G,\mu)$, Consequently, \eqref{eq:L2 Bochner integral} holds in $L^1(G,\mu)$. By Bochner's theorem, the map $t\mapsto P_t \psi$ is absolutely continuous in $L^1(G,\mu)$ and for a.e. $t>0$,
            \begin{equation*}
                \left\|\frac{P_{t+s}\psi-P_t\psi}{s}+P_t\Delta\psi\right\|_{L^1(G,\mu)}\to 0
            \end{equation*}
            as $s\to0$.
        \end{proof}
        Fix $\lambda>0$ and $\psi\in C_c^{\infty}(G)$. Define $F_{\psi}(t)=e^{-\lambda t}P_t\psi, t\geq 0$. By Claim \ref{claim:absolute continuity}, the map $t\mapsto F_{\psi}(t)$ is absolutely continuous and 
        \begin{equation*}\label{eq:derivative of F}
            F'_{\psi}(t)=
            -e^{-\lambda t}P_t(\lambda+\Delta)\psi
        \end{equation*}
        for a.e. $t>0$. Obviously, $F'_{\psi}(t)\in L^1(G,\mu)$ since
        \begin{equation*}
            \|F'_{\psi}(t)\|_{L^1(G,\mu)}\leq e^{-\lambda t}(\lambda\|\psi\|_{L^1(G,\mu)}+\|\Delta\psi\|_{L^1(G,\mu)}).
        \end{equation*}
        Moreover, we have $F_{\psi}(t)\to\psi$ in $L^1(G,\mu)$ as $t\to0$ and $\|F_{\psi}(t)\|_{L^1(G,\mu)}\leq e^{-\lambda t}\|\psi\|_{L^1(G,\mu)}\to0$ as $t\to\infty$.

        Since $p_t(g,h)=p_t(h,g)$, by Fubini's theorem, for $u\in L^{\infty}(G,\mu)$ and $\zeta\in L^1(G,\mu)$ it holds $\langle P_tu,\zeta\rangle=\langle u,P_t\zeta\rangle$,
        where $\langle\xi,\zeta\rangle=\int_G \xi(g)\zeta(g)d\mu(g)$ denotes the $L^{\infty}-L^1$ dual product. Using Fubini's theorem again, for $u\in C_b(G)$
        \begin{equation*}
            \begin{split}
                \langle Qu,(\lambda+\Delta)\psi\rangle&=\lambda\int_0^\infty e^{-\lambda t} \langle P_tu,(\lambda+\Delta)\psi\rangle\,dt\\
                     &=\lambda\int_0^\infty e^{-\lambda t} \langle u,P_t(\lambda+\Delta)\psi\rangle\,dt\\
                     &=-\lambda\int_0^\infty\langle u,F_\psi'(t)\rangle\,dt\\
                     &=-\lambda\bigl[\langle u,F_\psi(t)\rangle\bigr]_{t=0}^{t=\infty}\\
                     &=\lambda\langle u,\psi\rangle.
            \end{split}
        \end{equation*}

        Conclude that $\langle Qu,(\lambda+\Delta)\psi\rangle=\lambda\langle
u,\psi\rangle$ for every $\psi\in C_c^\infty(G)$. Since $\Delta$ is
formally self-adjoint, this is precisely the statement that
$(\lambda+\Delta)Qu=\lambda u$ in the distributional sense. In
particular, if $Qu=0$, then $(\lambda+\Delta)Qu=0$, and hence $\lambda
u=0$ as a distribution. Since $\lambda>0$, it follows that $u=0$ in
the distributional sense, and therefore $u(g)=0$ for $\mu$-a.e. $g\in
G$. Since $u\in C_b(G)$ is continuous and $\mu$ has full support,
every nonempty open subset of $G$ has positive $\mu$-measure. Thus $u$
cannot be nonzero at any point of $G$, and consequently $u\equiv0$.
This proves that $Q$ is injective and verifies the fourth property of
$Q$.
        \item Set $v=Qu$. Fix $p>n:=\dim G$. We choose a coordinate chart $\kappa: U_2\to\kappa(U_2)\subset\R^n$ containing $e$ and nested open sets 
        \[e\in V\Subset U_0 \Subset U_1 \Subset U_2\]
        whose images under $\kappa$ are concentric open balls. In particular, $\kappa(U_1)$ is a smooth Euclidean domain. We consider the strictly elliptic operator $\LL:=-(\lambda+\Delta)$.
        In the coordinate chart $\kappa$, $\LL$ becomes an elliptic second-order differential operator on $\kappa(U_2)\subset\R^n$ with smooth coefficients. 
       Since $U_1\Subset U_2$, this operator is uniformly elliptic
on $\kappa(U_1)$.

        Since $u\in C_b(G)$, we have $u\circ\kappa^{-1}\in L^{\infty}(\kappa(U_1))\subset L^p(\kappa(U_1))$.
Let $\LL_\kappa$ denote the coordinate representation of $\LL$. By \cite[Theorem 9.15]{GT01}, there exists a unique $w_1\in W^{2,p}(\kappa(U_1))\cap W_0^{1,p}(\kappa(U_1))$ satisfying $\LL_\kappa w_1=-\lambda u\circ\kappa^{-1}$ in the distributional sense on $\kappa(U_1)$, because we showed above that $(\lambda+\Delta)v=\lambda u$ in the distributional sense. Set $z_1:=v\circ\kappa^{-1}-w_1$. Then $\LL_\kappa z_1=0$ in the distributional sense on $\kappa(U_1)$.

Choose open balls $O,O_1$ with $O\Subset O_1\Subset\kappa(U_1)$ and $\eta\in C_c^\infty(\kappa(U_1))$ such that $\eta\equiv1$ on $O_1$ and $0\leq\eta\leq1$ on $\kappa(U_1)$. Let $\widetilde z_1=\eta z_1$. Then $\widetilde z_1$ has compact support contained in $\kappa(U_1)$ and $\widetilde z_1\equiv z_1$ on $O$. By the elliptic parametrix theorem\footnote{Intuitively speaking, it states that for an elliptic differential operator with smooth coefficients, there exists a properly supported pseudodifferential operator $P$ (called a parametrix) that acts as an \textit{approximate inverse} up to a smoothing error. Moreover, $P$ does not enlarge singular support.} \cite[Theorem 18.1.24]{Hor07}, there exists a properly supported local parametrix $P$ for $\LL_\kappa$ such that $P\LL_\kappa=\Id+R$, where $R$ is a smoothing operator, that is, an operator that sends distributions to smooth functions. Then
\begin{equation*}
    P(\LL_\kappa\widetilde z_1)=\widetilde z_1+R\widetilde z_1.
\end{equation*}
Since $R\widetilde z_1\in C^\infty(\kappa(U_1))$ and $P$ is pseudolocal, taking singular support on both sides gives
\begin{equation*}
    \sing\supp\widetilde z_1=\sing\supp P(\LL_\kappa\widetilde z_1)\subset\sing\supp(\LL_\kappa\widetilde z_1).
\end{equation*}
Writing $M_\eta$ for multiplication by $\eta$, direct calculation shows that
\begin{equation*}
    \LL_\kappa\widetilde z_1=\LL_\kappa(\eta z_1)=\eta\LL_\kappa z_1+[\LL_\kappa,M_\eta]z_1.
\end{equation*}
The commutator $[\LL_\kappa,M_\eta]:=\LL_\kappa\circ M_\eta-M_\eta\circ\LL_\kappa$ is a differential operator of order at most one whose coefficients vanish on $O_1$. Hence $\sing\supp(\LL_\kappa\widetilde z_1)\cap O=\emptyset$ and thus $\sing\supp\widetilde z_1\cap O=\emptyset$. This yields $\widetilde z_1\in C^\infty(O)$. Conclude $z_1\in C^\infty(O)$ since $\widetilde z_1\equiv z_1$ on $O$. Then $z_1\in C^\infty(\kappa(U_1))$ as $O\Subset\kappa(U_1)$ is arbitrary. Since $\kappa(U_0)\Subset\kappa(U_1)$, we have $z_1\in W^{2,p}(\kappa(U_0))$. Therefore $v\circ\kappa^{-1}=w_1+z_1\in W^{2,p}(\kappa(U_0))$.

For arbitrary $g\in G$, choosing a coordinate chart $(U^{(g)},\kappa^{(g)})$ near $g$ and repeating the above argument, we obtain $v\circ(\kappa^{(g)})^{-1}\in W^{2,p}_{\loc}(\kappa^{(g)}(U^{(g)}))$.

        We apply the $L^p$ estimate \cite[Theorem 9.11]{GT01} to  to
$ \LL_\kappa(v\circ\kappa^{-1})
 =-\lambda(u\circ\kappa^{-1})$ on $\kappa(U_0)$. Since $\kappa(V)\Subset\kappa(U_0)$, we obtain
\begin{equation}\label{eq:Lp estimate}
 \|v\circ\kappa^{-1}\|_{W^{2,p}(\kappa(V))}
 \leq C_p\Bigl(
 \lambda\|u\circ\kappa^{-1}\|_{L^p(\kappa(U_0))}
 +\|v\circ\kappa^{-1}\|_{L^p(\kappa(U_0))}
 \Bigr),
\end{equation}
where $C_p>0$ depends on $n,p,\kappa(V),\kappa(U_0)$
and the coefficients of $\LL_\kappa$.
Fix $g\in G$. Let $\rr_g(h)=hg$ and define
\[
    \kappa_g:=\kappa\circ\rr_g^{-1}
    :U_2g\longrightarrow\kappa(U_2).
\]
By \eqref{eq:Laplace commutes with right shift}, the
coordinate expression of $\LL$ in the chart $\kappa_g$
is the same as in the chart $\kappa$. Moreover,
$
    \kappa_g(U_0g)=\kappa(U_0)$ and
$
    \kappa_g(Vg)=\kappa(V).
$
Set
$
    v_g:=v\circ\kappa_g^{-1},
$ and
    $u_g:=u\circ\kappa_g^{-1}.$
Then
$    \LL_\kappa v_g=-\lambda u_g$
    on $\kappa(U_0)$.
Applying the same $L^p$ estimate gives
\begin{equation}\label{eq:Lp estimate independent of g}
\begin{split}
    \|v_g\|_{W^{2,p}(\kappa(V))}
    &\leq C_p\Bigl(
        \lambda\|u_g\|_{L^p(\kappa(U_0))}
        +\|v_g\|_{L^p(\kappa(U_0))}
    \Bigr)\\
    &\leq C_p(\lambda+1)
        |\kappa(U_0)|^{1/p}\|u\|_\infty.
\end{split}
\end{equation}
Here all coordinate $L^p$ norms use Lebesgue measure,
and $|\kappa(U_0)|$ denotes Lebesgue measure.
The constant $C_p$ is independent of $g$, since the
Euclidean domains and the coefficients of the coordinate
operator are independent of $g$. In the last inequality
we used
$
    \|u_g\|_{L^p(\kappa(U_0))}
    \leq |\kappa(U_0)|^{1/p}\|u\|_\infty
$
and the analogous estimate for $v_g$, together with
$\|v\|_\infty=\|Qu\|_\infty\leq\|u\|_\infty$ (see Equation \eqref{eq:Qu_leq_u}).
       
      Since $p>n$, $1<2-\frac{n}{p}<2$. By the Sobolev embedding \cite[Theorem 7.26 (ii)]{GT01}, $W^{2,p}(\kappa(V))\hookrightarrow C^{1,1-\frac{n}{p}}(\overline{\kappa(V)})$. Applying this to $v_g$ and using \eqref{eq:Lp estimate independent of g}, we obtain a constant $C'_{p,\lambda}>0$, independent of $g$, such that $\|Dv_g\|_{L^\infty(\kappa(V))}\leq C'_{p,\lambda}\|u\|_\infty$. Since $\rr_g$ is an isometry and $\kappa_g=\kappa\circ\rr_g^{-1}$, the coordinate matrix of the Riemannian metric in the chart $\kappa_g$ is the same as in the chart $\kappa$. By compactness of $\overline{\kappa(V)}$, it follows that there exists $C_\kappa>0$, independent of $g$, such that $\sup_{y\in Vg}|\nabla v(y)|_{\gg}\leq C_\kappa\|Dv_g\|_{L^\infty(\kappa(V))}$. Therefore there exists $C_{p,\lambda}>0$ such that $\sup_{y\in Vg}|\nabla v(y)|_{\gg}\leq C_{p,\lambda}\|u\|_\infty$ for every $g\in G$. Note $G\subset\bigcup_{g\in G}Vg$ since $e\in V$. Hence
        \begin{equation}\label{eq:global estimate}
            \|\nabla Qu\|_{\infty}\leq C_{p,\lambda} \|u\|_{\infty}.
        \end{equation}
        Denote $c_u=(\sup_G u+\inf_G u)/2$. Replacing $u$ with $u-c_u$ in \eqref{eq:global estimate}, noting $\|u-c_u\|_\infty
=\frac12\osc(u)$, we obtain that 
        \begin{equation*}
            \|\nabla Qu\|_{\infty}\leq\frac{C_{p,\lambda}}{2}\cdot\osc(u).
        \end{equation*}
        Thus $Qu\in C_b^1(G)$. 

        Finally, for $g,h\in G$, let $\gamma_1$ be the unit-speed minimizing geodesic from $g$ to $h$. Then
        \begin{equation*}
            |Qu(g)-Qu(h)|\leq \int_0^{\rho(g,h)}|\nabla Q(u)(\gamma_1(s))|ds \leq \|\nabla Qu\|_{\infty}\cdot\rho(g,h).
        \end{equation*}
        Since $g$ and $h$ are arbitrary, $\lip_{\rho}(Qu)\leq L\cdot\osc(u)$ with $L=C_{p,\lambda}/2>0$.
    \end{enumerate}
\end{proof}

\section{Proof of the main theorem}\label{Section:proof of the main result}

\begin{proof}[Proof of Theorem \ref{theorem: main theorem}]
We will only prove the “if” part as the “only if” part of the statement is trivial.
Assume that $\fix(X,G)$ embeds into $[0,1]$ and also into $\R$.
 By Theorem \ref{theorem: embedding into C(G,R)}, there exists $f_0\in C(X)$ such that $J_{f_0}:X\to C(G)$  \footnote{Recall $J_f: X\to C(G)$ is defined in Subsection \ref{subsection:spaces}.} is an equivariant embedding.

Recall $d$ is a compatible right-invariant metric on $G$. Let $\rho$ be a right-invariant Riemannian distance on $G$. Let $Q:C_b(G)\longrightarrow C_b^1(G)$ and $L>0$ be given
by Lemma \ref{lemma: improvement of regularity}.  Let $c,r,\eta>0$ be given by
Lemma \ref{lemma: comparison between compatible metrics}.  Choose
\begin{equation}\label{eq:delta-choice}
 0<\delta<\min\left\{\frac12,\eta,\frac{c}{L}\right\}
\end{equation}
and a closed interval  $[a_1,b_1]\subset(0,1)$ of length
$b_1-a_1=\delta$.
Denote $m_0=\min_Xf_0$ and $M_0=\max_Xf_0$.  Choose $\alpha>0$ small enough and $\beta\in\R$ such that
$\alpha(M_0-m_0)<\delta$ and
$\alpha[m_0,M_0]+\beta\subset [a_1,b_1]$. Define $f: X\to [a_1,b_1]$ by $f:=\alpha f_0+\beta$.  Then $\osc(J_f(x))\leq\delta$ for all $x\in X$.
Obviously,
$J_f: X\to C_b(G)$ is also a $G$-equivariant embedding.

For $x\in X$, define $q(x)=Q(J_f(x))(e)$. Let $\nu$ be the
probability measure given by Lemma \ref{lemma: improvement of regularity} $(3)$.  Thus
\begin{equation*}
 q(x)=\int_GJ_f(x)(h)d\nu(h)=\int_G f(hx)\,d\nu(h).
\end{equation*}
By the dominated convergence theorem, $q$ is continuous. We now prove that $J_q$ is the desired $G$-equivariant embedding from $X$ to $\lip_1(G,I)$. 

First, we claim that $J_q(x)=Q(J_f(x))$ for every $x\in X$. Indeed, for every $a,g\in G$, note 
\[
 J_f(ax)(g)=f(gax)=J_f(x)(ga)=(R_a\circ J_f(x))(g)
\]
and therefore
\begin{align*}
 J_q(x)(a)=q(ax)=Q(J_f(ax))(e)=Q(R_a\circ J_f(x))(e)=R_a\circ Q(J_f(x))(e)=Q(J_f(x))(a),
\end{align*}
where we have used Lemma \ref{lemma: improvement of regularity} $(2)$ in the fourth equality.

If $J_q(x)=J_q(y)$, then $Q(J_f(x))=Q(J_f(y))$ and thus
$J_f(x)=J_f(y)$ since $Q$ is injective on $C_b(G)$ by Lemma \ref{lemma: improvement of regularity} $(4)$. We further have $x=y$ since $J_f$ is injective.  Conclude $J_q$ is injective.

Finally, since
$J_f(x)(G)\subset [a_1,b_1]$, by Lemma \ref{lemma: improvement of regularity} $(1)$
\[
 Q(J_f(x))-a_1=Q(J_f(x)-a_1)\geq0,
 \qquad
 b_1-Q(J_f(x))=Q(b_1-J_f(x))\geq0.
\]
Thus $J_q(x)(G)=Q(J_f(x))(G)\subset [a_1,b_1]$ and $\osc(J_q(x))\leq b_1-a_1=\delta$.  By
Lemma \ref{lemma: improvement of regularity} $(5)$ we have
\begin{equation}\label{eq:rho-lipschitz-filtered-code}
 \lip_\rho(J_q(x))=\lip_\rho(Q(J_f(x)))
 \leq L\cdot\osc(J_f(x))\le L\delta.
\end{equation}
If $\rho(g,h)<r$, combining \eqref{eq:local-metric-comparison},
\eqref{eq:delta-choice} and \eqref{eq:rho-lipschitz-filtered-code},  we obtain that
\[
 |J_q(x)(g)-J_q(x)(h)|
 \le L\delta\,\rho(g,h)
 \le \frac{L\delta}{c}\,d(g,h)
 \le d(g,h).
\]
If $\rho(g,h)\ge r$, combining \eqref{eq:far-metric-comparison} and
\eqref{eq:delta-choice},  we have
\[
 |J_q(x)(g)-J_q(x)(h)|
 \leq \osc(J_q(x))\leq\delta\le\eta\le d(g,h).
\]
Thus $J_q(x)\in\lip_1(G,I)$ for all $x\in X$. Conclude $J_q:X\to\lip_1(G,I)$ is an equivariant embedding as desired.

\end{proof}

\section*{Acknowledgments}

Q.H. is grateful to Ning Zhou for useful discussions on the proof of Lemma \ref{lemma: improvement of regularity}.
The present work was inspired by helpful discussions with Masaki Tsukamoto, for which we are grateful, as well as by earlier joint work of the first two authors with him. Masaki Tsukamoto graciously declined our invitation to join the present paper as a co-author.
Q.H. was partially supported by National Key Research and Development Program of China 2024YFA1013600; China Postdoctoral Science Foundation 2025M773065; Fundamental Research Funds for the Central Universities WK0010250102.
Y.G. was partially supported by the National Science Centre (Poland) grant 2020/39/B/ST1/02329. K.O. was partially supported by National Natural Science Foundation of China (12371196, 12426201).

\bibliographystyle{alpha}
\bibliography{BebutovKakutani}

\end{document}